\documentclass{amsart}
\usepackage {amsmath, amscd}
\usepackage {amssymb}
\usepackage {epsfig}
\usepackage {bm}
\usepackage {indentfirst} 
\usepackage{color}
\usepackage{tikz-cd}

\usepackage[active]{srcltx}


\numberwithin{equation}{section}

\def\<{\langle}
\def\>{\rangle}

\def\LL{{\mathcal L}}

\def\1{\mathbf{1}}

\newtheorem{lem}{Lemma}[section]
\newtheorem{prop}[lem]{Proposition}
\newtheorem{thm}[lem]{Theorem}
\newtheorem{corollary}[lem]{Corollary}

\theoremstyle{definition}

\newtheorem{example}[lem]{Example}

\title{Generalized Crofoot transform and Matrix valued asymmetric truncated Toeplitz operators  }

\author{Rewayat Khan}
\address{Abdus Salam School of Mathematical Sciences, GC University, Lahore, Pakistan}
\email{rewayat.khan@gmail.com}


\subjclass{Primary 47B35, 47A45, Secondary 47B32, 30J05}
\keywords{Generalized Crofoot tranform, Conjugation}

\begin{document}
	\maketitle

\begin{abstract}
	 Matrix valued asymmetric truncated Toeplitz operator are compression of multiplication operators acting between two model spaces. These are the generalization of matrix valued truncated Toeplitz operators. In this paper we use generalized Crofoot transform to characterize symbol of matrix valued asymmetric truncated Toeplitz operator equal to zero operator.
\end{abstract}

\section{Introduction}
Truncated Toeplitz operators are compression of multiplication operators to the backward shift invariant subspace of the Hardy-Hilbert space called model space denoted by $K_{\theta}=[\theta H^{2}]^{\perp}$ with $\theta$ a complex-valued inner function i.e $|\theta(e^{it})|=1$ a.e on $\mathbb{T}$. The study of truncated Toeplitz operators was initiated by D. Sarason in 2007 (see \cite{sar}).\par
In \cite{RDK}, we have introduced the study of matrix valued truncated Toeplitz operators which makes sense by considering $E$ a finite $d$-dimensional Hilbert space and $\Theta$ an $\mathcal{L}(E)$-valued inner function. The corresponding model space is denoted by $K_{\Theta}=[\Theta H^{2}(E)]^{\perp}$. This is the main instinct of study in our paper. One of the classical result in \cite{RDK} which characterize the symbol of the matrix valued truncated Toeplitz operators to be zero operator. Also like the truncated Toeplitz operators, the matrix valued truncated Toeplitz operators is not uniquely determined by its symbol (see \cite{RDK}, Theorem 6.3 and Corollary 6.4). \par
Recently in \cite{RK}, we have introduced a generalization of the Crofoot transform, the so-called generalized Crofoot transform,  which is a unitary operator maps one model space onto another model space.\par
The study of truncated Toeplitz operators has a natural generalization
to the study of new class of operators, so-called  asymmetric truncated Toeplitz operators. This study is recently initiated in \cite{cj, r, cj2}(see also \cite{L}, \cite{l2} and \cite{JL}). By the same way the study of matrix valued truncated Toeplitz operators can be extended to the study of matrix valued asymmetric truncated Toeplitz operators which acts between two different model spaces. In \cite{cj}, zero asymmetric truncated Toeplitz operators has been characterized in terms of its symbol for a special case when one of the inner function divide the other (see Theorem 4.4). But more general proof can be found in \cite{JL} (see Theorem 2.1). Asymmetric truncated Toeplitz operators on finite dimensional model spaces is defined in \cite{jl}.

 The structure of the paper is the following. In Section 2, we gather some properties of the model spaces, bounded truncated and matrix valued truncated Toeplitz operators. We use generalized Crofoot transform to characterize the symbol of the matrix valued asymmetric truncated Toeplitz operators in Section 3. In section 4, we characterize zero matrix valued asymmetric truncated Toeplitz operator, in terms of its symbol.

\section{Preliminaries}

Let $\mathbb{C}$ denote the complex plane, $\mathbb{D}={\{z\in\mathbb{C}: |z|<1}\}$ the unit disc, $\mathbb{T}={\{z\in\mathbb{C}:|z|=1}\}$ the unit circle.
Throughout the paper $E$ will denote $d$-dimensional complex Hilbert space, and $\LL(E)$ the algebra of bounded linear operators on $E$, which  may be identified with $d\times d$ matrices.

The space $L^{2}(E)$  is defined,  as usual, by
\[
L^{2}(E)={\Big\{f:\mathbb{T}\to E: f(e^{it})=\sum\limits_{-\infty}^{\infty}a_{n}e^{int}:  a_{n}\in E,\quad   \sum\limits_{-\infty}^{\infty}\|a_{n}\|^{2}<\infty }\Big\},
\]
endowed with the inner product
\begin{equation*}
\langle f,g\rangle_{L^{2}(E)}=\frac{1}{2\pi}\int\limits_{0}^{2\pi}\langle f(e^{it}),g(e^{it})\rangle_{E}\,dt.
\end{equation*}
As usual, $H^{2}(E)$ is the space of $E$-valued analytic functions defined on the unit disc $\mathbb{D}$, whose Taylor coefficients are square summable. It can also be seen as a closed subspace of $L^{2}(E)$ which consists of functions from $L^{2}(E)$ such that their Fourier coefficients corresponding to negative indices vanishes.\par
The unilateral shift $S$ on $H^{2}(E)$ is the operator of multiplication by $z$, that is
$$Sf=zf(z).$$
The adjoint $S^{*}$ is called the backward shift and is given by the formula
$$S^{*}f=\frac{1}{z}(f(z)-f(0)).$$
A nonconstant function  $\Theta\in H^{\infty}(\mathcal{L}(E))$ is called inner function if $\Theta$ is an isometry a.e on $\mathbb{T}$. In sequel we will always suppose that $\Theta$ is pure, which means that $\|\Theta(0)\|<1$.
Beurling theorem provides a characterization of all backward shift nontrivial closed invariant subspace of $H^{2}(E)$ that is, a closed nontrivial subspace of $H^{2}(E)$ is $S^{*}$-invariant if and only if it is of the form
$$K_{\Theta}=H^{2}(E)\ominus \Theta H^{2}(E),$$
 the space $K_{\Theta}$ is called model space.\par
A Toeplitz operator $T_{\Phi}$ with symbol $\Phi\in L^{\infty}(\mathcal{L}(E))$ is defined on $H^{2}(E)$ by
$$T_{\Phi}f=P(\Phi f).$$ Clearly $T_{\Phi}$ is bounded if and only if $\Phi\in L^{\infty}(\mathcal{L}(E))$. The operators $S$ and $S^{*}$ are example of Toeplitz operators with symbol $zI_{E}$ and $\overline{z}I_{E}$ respectively, where $I_{E}$ is the identity operator on $E$.\par
Recall that model spaces are reproducing kernel Hilbert spaces with reproducing kernel function $k_{\lambda}^{\Theta}x$. It means that for every $f\in K_{\Theta}$ and each $\lambda\in \mathbb{D}$, it satisfies the relation
$$\langle f, k_{\lambda}^{\Theta}x\rangle=\langle f(\lambda), x\rangle,$$
 where the reproducing kernel function $k_{\lambda}^{\Theta}x$ with $x\in E$ is of the form
 $$k_{\lambda}^{\Theta}x=\frac{1}{1-\overline{\lambda}z}(I-\Theta(z)\Theta(\lambda)^{*})x.$$ It is well known that $k_{\lambda}^{\Theta}x\in H^{\infty}(E)$ and the set $K_{\Theta}^{\infty}=K_{\Theta}\cap H^{\infty}(E)$ is dense in $K_{\Theta}$.\par
 In \cite{RDK}, we have developed the study of matrix valued truncated Toeplitz operators and its properties.
 Suppose $\Theta$ is fixed pure inner function and that $\Phi\in L^{2}(\mathcal{L}(E))$. Let $P_{\Theta}$ be orthogonal projection from $H^{2}(E)$ onto $K_{\Theta}$. Then consider the linear map $f\to P_{\Theta}(\Phi f)$ defined on $K_{\Theta}^{\infty}$. Incase it is bounded, it uniquely determines an operator in $\mathcal{L}(K_{\Theta})$, denoted by $A_{\Phi}^{\Theta}$, and is called a matrix valued truncated Toeplitz operator. $\Phi$ is then called symbol of the $A_{\Phi}^{\Theta}$. The space of all matrix valued truncated Toeplitz operators is denoted by $\mathcal{T}(\Theta)$ . It is immediate that $A_{\Phi}^{*}=A_{\Phi^{*}}$. \par
 Recently, the study of the class of standard asymmetric truncated Toeplitz operators is initiated in \cite{cj, r, cj2}. Let $\Theta_{1}$ and $\Theta_{2}$ be two inner functions. A matrix valued asymmetric truncated Toeplitz operator $A_{\Phi}^{\Theta_{1}, \Theta_{2}}$ with symbol $\Phi\in L^{2}(\mathcal{L}(E))$ is an operator from $K_{\Theta_{1}}$ into $K_{\Theta_{2}}$ densely defined by
 $$A_{\Phi}^{\Theta_{1}, \Theta_{2}}f=P_{\Theta_{2}}(\Phi f),$$ for all $f$ in  $K_{\Theta_{1}}^{\infty}$.\par The space of all matrix valued asymmetric truncated Toeplitz operators is denoted by $\mathcal{T}(\Theta_{1}, \Theta_{2})$.

\section {Generalized Crofoot transform and matrix valued asymmetric truncated Toeplitz operators}\label{se:GCT}
Let $W_{1}, W_{2}\in \mathcal{L}(E)$ be strict contraction and $\Theta_{1},\Theta_{2} \in H^{\infty} (\mathcal{L}(E))$ be pure inner functions. Then by \cite{RK} the function $\Theta_{1}^{\prime}$ defined in terms of $W_{1}$ and $\Theta_{1}$ given by
\begin{equation}\label{Eq: 1 inner}
\Theta_{1}^{\prime}(\lambda)=-W_{1}+D_{W_{1}^{*}}(I-\Theta_{1}(\lambda)W_{1}^{*})^{-1}\Theta_{1}(\lambda)D_{W_{1}},
\end{equation}
 is pure inner function.
Similarly, we define
\begin{equation}\label{Eq: 2 inner}
\Theta_{2}^{\prime}(\lambda)=-W_{2}+D_{W_{2}^{*}}(I-\Theta_{2}(\lambda)W_{2}^{*})^{-1}\Theta_{2}(\lambda)D_{W_{2}},
\end{equation}
 is also pure. \par
Let $K_{\Theta_{1}}$ and $K_{\Theta_{2}}$ be the model spaces corresponding to $\Theta_{1}$ and $\Theta_{2}$ respectively and $K_{\Theta_{1}^{\prime}}$ and $K_{\Theta_{2}^{\prime}}$ be the model spaces corresponding to $\Theta_{1}^{\prime}$ and $\Theta_{2}^{\prime}$ respectively.\par
The generalized Crofoot transform is defined in \cite{RK}(see Theorem 3.3) which is a unitary operator $J_{W_{1}}: K_{\Theta_{1}}\longrightarrow K_{\Theta_{1}^{\prime}}$ defined by
$$J_{W_{1}}f=D_{W_{1}^{*}}(I-\Theta_{1}(\lambda)W_{1}^{*})^{-1}f,$$
and its adjoint $J_{W_{1}}^{*}: K_{\Theta_{1}^{\prime}}\longrightarrow K_{\Theta_{1}}$ is given by
$$J_{W_{1}}^{*}g=D_{W_{1}^{*}}(I+\Theta^{\prime}_{1}(\lambda)W_{1}^{*})^{-1}g.$$
 We define $J_{W_{2}}:K_{\Theta_{2}}\to K_{\Theta_{2}^{\prime}}$ and its adjoint $J_{W_{2}}^{*}$ analogously.\par The action of $J_{W_{2}}$ on the reproducing kernel is given in \cite{RK} (see Proposition 3.4) that is
\begin{equation}\label{eq:J(repro)}
J_{W_{2}}(k_{\lambda}^{\Theta_{2}}(I-W_{2}\Theta_{2}(\lambda)^{*})^{-1}D_{W_{2}^{*}}y)=k_{\lambda}^{\Theta_{2}^{\prime}}y,
\end{equation}
\begin{thm}\label{Thm of Crofoot and asymmetric tto}
Let $\Theta_{1}$ and $\Theta_{2}$ be nonconstant inner functions belong to $\mathcal{L}(E)$. An operator $A_{\Phi}^{\Theta_{1}, \Theta_{2}}$ with $\Phi\in L^{2}(\mathcal{L}(E))$ belong to $\mathcal{T}(\Theta_{1}, \Theta_{2})$ if and only if $J_{W_{2}}A_{\Phi}^{\Theta_{1}, \Theta_{2}}J_{W_{1}}^{*}=A_{\Psi}^{\Theta_{1}^{\prime}, \Theta_{2}^{\prime}}\in \mathcal{T}(\Theta_{1}^{\prime}, \Theta_{2}^{\prime})$, where
\begin{equation}
\Psi = D_{W_{2}^{*}}(I-\Theta_{2}(\lambda)W_{2}^{*})^{-1}\Phi D_{W_{1}^{*}}(I+\Theta^{\prime}_{1}(\lambda)W_{1}^{*})^{-1}.
\end{equation}
\end{thm}
\begin{proof}
Let $f\in K_{\Theta_{1}^{\prime}}$ then consider
\begin{eqnarray*}
\langle J_{W_{2}}A_{\Phi}^{\Theta_{1}, \Theta_{2}}J_{W_{1}}^{*}f, k_{\lambda}^{\Theta_{2}^{\prime}}x\rangle
&=&\langle J_{W_{2}}A_{\Phi}^{\Theta_{1}, \Theta_{2}}D_{W_{1}^{*}}(I+\Theta^{\prime}_{1}(\lambda)W_{1}^{*})^{-1}f, k_{\lambda}^{\Theta_{2}^{\prime}}x\rangle\\
&=&\langle P_{\Theta_{2}}(\Phi D_{W_{1}^{*}}(I+\Theta^{\prime}_{1}(\lambda)W_{1}^{*})^{-1}f), J_{W_{2}}^{*}k_{\lambda}^{\Theta_{2}^{\prime}}x\rangle\\
&=&\langle P_{\Theta_{2}}(\Phi D_{W_{1}^{*}}(I+\Theta^{\prime}_{1}(\lambda)W_{1}^{*})^{-1}f), k_{\lambda}^{\Theta_{2}}(I-W_{2}\Theta_{2}(\lambda)^{*})^{-1}D_{W_{2}^{*}}x\rangle \\
&=&\langle \Phi D_{W_{1}^{*}}(I+\Theta^{\prime}_{1}(\lambda)W_{1}^{*})^{-1}f), P_{\Theta_{2}}(k_{\lambda}^{\Theta_{2}}(I-W_{2}\Theta_{2}(\lambda)^{*})^{-1}D_{W_{2}^{*}}x)\rangle \\
&=&\langle \Phi D_{W_{1}^{*}}(I+\Theta^{\prime}_{1}(\lambda)W_{1}^{*})^{-1}f, k_{\lambda}^{\Theta_{2}}(I-W_{2}\Theta_{2}(\lambda)^{*})^{-1}D_{W_{2}^{*}}x\rangle \\
&=&\langle J_{W_{2}} (\Phi D_{W_{1}^{*}}(I+\Theta^{\prime}_{1}(\lambda)W_{1}^{*})^{-1}f), J_{W_{2}}(k_{\lambda}^{\Theta_{2}}(I-W_{2}\Theta_{2}(\lambda)^{*})^{-1}D_{W_{2}^{*}}x)\rangle \\
&=&\langle D_{W_{2}^{*}}(I-\Theta_{2}(\lambda)W_{2}^{*})^{-1}\Phi D_{W_{1}^{*}}(I+\Theta^{\prime}_{1}(\lambda)W_{1}^{*})^{-1}f, k_{\lambda}^{\Theta^{\prime}_{2}}x\rangle \\
&=&\langle P_{\Theta_{2}^{\prime}}(\Psi f), k_{\lambda}^{\Theta^{\prime}_{2}}x\rangle=\langle A^{\Theta_{1}^{\prime}, \Theta_{2}^{\prime}}_{\Psi}f, k_{\lambda}^{\Theta^{\prime}_{2}}x\rangle.
\end{eqnarray*}
Therefore
$$\langle J_{W_{2}}A_{\Phi}^{\Theta_{1}, \Theta_{2}}J_{W_{1}}^{*}f, g\rangle=\langle A^{\Theta_{1}^{\prime}, \Theta_{2}^{\prime}}_{\Psi}f, g\rangle$$
for any $g$ in the linear span of $k_{\lambda}^{\Theta^{\prime}_{2}}x$, $\lambda\in \mathbb{D}$ and $x\in E$. The required result follows by the density of the last set in $K_{\Theta_{2}^{\prime}}$.\par
 The symbol $\Psi$ is given by
$$\Psi= D_{W_{2}^{*}}(I-\Theta_{2}(\lambda)W_{2}^{*})^{-1}\Phi D_{W_{1}^{*}}(I+\Theta^{\prime}_{1}(\lambda)W_{1}^{*})^{-1}.$$
Now let $A_{\Psi}^{\Theta_{1}^{\prime}, \Theta_{2}^{\prime}}\in \mathcal{T}(\Theta_{1}^{\prime},\Theta_{2}^{\prime})$,  $f\in K_{\Theta_{1}}$ and $x=(I-W_{2}\Theta_{2}(\lambda)^{*})^{-1}D_{W_{2}^{*}}y$ then
\begin{eqnarray*}
\langle J_{W_{2}}^{*}A_{\Psi}^{\Theta_{1}^{\prime}, \Theta_{2}^{\prime}}J_{W_{1}}f, k_{\lambda}^{\Theta_{2}}x \rangle
&=&\langle A_{\Psi}^{\Theta_{1}^{\prime}, \Theta_{2}^{\prime}}J_{W_{1}}f, J_{W_{2}}k_{\lambda}^{\Theta_{2}}x \rangle\\
&=&\langle A_{\Psi}^{\Theta_{1}^{\prime}, \Theta_{2}^{\prime}}J_{W_{1}}f, k_{\lambda}^{\Theta^{\prime}_{2}}y \rangle\\
&=&\langle P_{ \Theta_{2}^{\prime}}(\Psi D_{W_{1}^{*}}(I-\Theta_{1}(\lambda)W_{1}^{*})^{-1}f), k_{\lambda}^{\Theta^{\prime}_{2}}y \rangle\\
&=&\langle \Psi D_{W_{1}^{*}}(I-\Theta_{1}(\lambda)W_{1}^{*})^{-1}f, k_{\lambda}^{\Theta^{\prime}_{2}}y \rangle\\
&=&\langle J_{W_{2}}^{*} (\Psi D_{W_{1}^{*}}(I-\Theta_{1}(\lambda)W_{1}^{*})^{-1}f),  J_{W_{2}}^{*}k_{\lambda}^{\Theta^{\prime}_{2}}y \rangle\\
&=&\langle D_{W_{2}^{*}}(I+\Theta_{2}^{\prime}(\lambda)W_{2}^{*})^{-1} \Psi D_{W_{1}^{*}}(I-\Theta_{1}(\lambda)W_{1}^{*})^{-1}f, k_{\lambda}^{\Theta_{2}}x \rangle\\
&=&\langle P_{\Theta_{2}}(\Phi f), k_{\lambda}^{\Theta_{2}}x\rangle
\end{eqnarray*}
where
\begin{equation}
\Phi=D_{W_{2}^{*}}(I+\Theta_{2}^{\prime}(\lambda)W_{2}^{*})^{-1} \Psi D_{W_{1}^{*}}(I-\Theta_{1}(\lambda)W_{1}^{*})^{-1}.
\end{equation}
\end{proof}
\begin{example}Let $\Theta_1(z)=z^nI_{E},\Theta_2(z)=z^mI_{E}$, such that $m<n$,  then the corresponding
model spaces are
 $$K_{\Theta_1}=\left\lbrace a_0+a_1z+...+a_{n-1}z^{n-1} \right\rbrace, K_{\Theta_2}=\left\lbrace b_0+b_1z+...+b_{m-1}z^{m-1}   \right\rbrace,$$
 and $\Phi(e^{it})=\sum_{s\in \mathbb{Z}}e^{it}\Delta_s$, with $\Delta_s\in\mathcal{L}(E)$, the operator
 $A_\Phi^{{\Theta_1},{\Theta_2}}\in\mathcal{T}(\Theta_{1}, \Theta_{2})$ has the block matrix decomposition

  $$ A_\Phi^{{\Theta_1},{\Theta_2}}=\left(
 \begin{array}{cccccc}

\Delta_0 & \Delta_1& \dots&\Delta_{m-1}& \dots&\Delta_{n-1} \\
\Delta_{-1}  &  \Delta_0 &\dots &\Delta_{m-2} &\dots&\Delta_{n-2}  \\
     \vdots & \vdots &\vdots & \dots&\vdots &  \vdots\\
\Delta_{-(m-1)} & \Delta_{-(m-2)} &\dots&\Delta_0  &\dots &\Delta_{n-m}\
 \end{array}
 \right),$$
\end{example}

\section{Condition for $A_{\Phi}^{\Theta_{1}, \Theta_{2}}=0$}
Denote by $\mathcal{M}_{\Theta_{1}}$ (respectively by $\mathcal{M}_{\Theta_{2}}$), the orthogonal complement of $\Theta_{1} H^{2}(\mathcal{L}(E))$ (respectively of $\Theta_{2}H^{2}(\mathcal{L}(E))$) in $H^{2}(\mathcal{L}(E))$ endowed with the Hilbert-Schmidt norm.
\begin{lem}\label{Eq: condition for zero oper}
If $\Phi\in[\Theta_{1} H^{2}(\mathcal{L}(E))]^{*}+ \Theta_{2} H^{2}(\mathcal{L}(E))$ then $A_{\Phi}^{\Theta_{1},\Theta_{2}}=0.$
\end{lem}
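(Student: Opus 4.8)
The plan is to split the symbol according to the hypothesis and to check that each summand annihilates the operator. Write $\Phi = \Phi_{1} + \Phi_{2}$ with $\Phi_{1} = \Psi_{1}^{*}\Theta_{1}^{*}$ for some $\Psi_{1}\in H^{2}(\mathcal{L}(E))$ (so that $\Phi_{1}\in[\Theta_{1}H^{2}(\mathcal{L}(E))]^{*}$) and $\Phi_{2} = \Theta_{2}\Psi_{2}$ with $\Psi_{2}\in H^{2}(\mathcal{L}(E))$. Since $A_{\Phi}^{\Theta_{1},\Theta_{2}}f = P_{\Theta_{2}}(\Phi f)$ is linear in $\Phi$, it suffices to prove $P_{\Theta_{2}}(\Phi_{1}f) = 0$ and $P_{\Theta_{2}}(\Phi_{2}f) = 0$ for every $f$ in the dense subspace $K_{\Theta_{1}}^{\infty}$; as $P_{\Theta_{2}}(\Phi f)$ lies in $K_{\Theta_{2}}$, I will test these against the reproducing kernels $k_{\lambda}^{\Theta_{2}}x$ (equivalently against $K_{\Theta_{2}}^{\infty}$), whose span is dense in $K_{\Theta_{2}}$.

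The contribution of $\Phi_{2}$ is immediate. Because $f\in K_{\Theta_{1}}^{\infty}\subset H^{\infty}(E)$ and $\Psi_{2}\in H^{2}(\mathcal{L}(E))$, the product $\Psi_{2}f$ lies in $H^{2}(E)$, hence $\Phi_{2}f = \Theta_{2}(\Psi_{2}f)\in\Theta_{2}H^{2}(E)$. Since $K_{\Theta_{2}} = H^{2}(E)\ominus\Theta_{2}H^{2}(E)$ is orthogonal to $\Theta_{2}H^{2}(E)$, we get $P_{\Theta_{2}}(\Phi_{2}f) = 0$.

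For $\Phi_{1}$ I would argue by the orthogonality defining $K_{\Theta_{1}}$. Fix $f\in K_{\Theta_{1}}^{\infty}$ and $g\in K_{\Theta_{2}}^{\infty}$. Using that $P_{\Theta_{2}}$ is a self-adjoint projection fixing $g$, and then pushing the pointwise matrix adjoints across the inner product of $L^{2}(E)$, one finds
\[
\langle P_{\Theta_{2}}(\Phi_{1}f), g\rangle = \langle \Psi_{1}^{*}\Theta_{1}^{*}f, g\rangle = \langle \Theta_{1}^{*}f, \Psi_{1}g\rangle = \langle f, \Theta_{1}\Psi_{1}g\rangle.
\]
Here $\Psi_{1}g\in H^{2}(E)$ because $g\in H^{\infty}(E)$, so $\Theta_{1}\Psi_{1}g\in\Theta_{1}H^{2}(E)$; since $f\in K_{\Theta_{1}}\perp\Theta_{1}H^{2}(E)$, the last inner product vanishes. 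Thus $P_{\Theta_{2}}(\Phi_{1}f)$ is orthogonal to a dense subset of $K_{\Theta_{2}}$ and therefore equals $0$.

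Combining the two computations gives $A_{\Phi}^{\Theta_{1},\Theta_{2}}f = P_{\Theta_{2}}(\Phi_{1}f) + P_{\Theta_{2}}(\Phi_{2}f) = 0$ for all $f\in K_{\Theta_{1}}^{\infty}$, and density of $K_{\Theta_{1}}^{\infty}$ in $K_{\Theta_{1}}$ finishes the proof. The only genuinely delicate point is the integrability bookkeeping: the factors $\Psi_{1},\Psi_{2}$ are only in $H^{2}$, so the products $\Psi_{1}g$, $\Psi_{2}f$ and $\Phi_{i}f$ need not be well behaved for arbitrary $f, g\in K_{\Theta_{i}}$. Restricting $f$ and the test functions to the bounded dense subsets $K_{\Theta_{i}}^{\infty}$ (or to reproducing kernels, which lie in $H^{\infty}(E)$) is exactly what keeps every product in $H^{2}(E)$ and legitimizes the adjoint manipulations; this is where I would be most careful.
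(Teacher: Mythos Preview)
Your proof is correct and follows essentially the same route as the paper: split $\Phi=\Psi_{1}^{*}\Theta_{1}^{*}+\Theta_{2}\Psi_{2}$, observe that $\Theta_{2}\Psi_{2}f\in\Theta_{2}H^{2}(E)$ is killed by $P_{\Theta_{2}}$, and for the other term compute $\langle P_{\Theta_{2}}(\Psi_{1}^{*}\Theta_{1}^{*}f),g\rangle=\langle f,\Theta_{1}\Psi_{1}g\rangle=0$. If anything, your version is more careful than the paper's in restricting $f$ and the test functions to $K_{\Theta_{i}}^{\infty}$ so that all products stay in $H^{2}(E)$.
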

\begin{proof}
Let $\Phi\in[\Theta_{1} H^{2}(\mathcal{L}(E))]^{*}+ \Theta_{2} H^{2}(\mathcal{L}(E))$ then $\Phi=\Theta_{2}\Phi_{2}+\Phi_{1}^{*}\Theta_{1}^{*}$ with $\Phi_{1}, \Phi_{2}\in H^{2}(\mathcal{L}(E))$ and $f\in K_{\Theta_{1}}$. Then
$$A_{\Phi}^{\Theta_{1}, \Theta_{2}}f=P_{\Theta_{2}}(\Theta_{2}\Phi_{2}f)+P_{\Theta_{2}}(\Phi_{1}^{*}\Theta_{1}^{*}f).$$
It is clear that $P_{\Theta_{2}}(\Theta_{2}\Phi_{2}f)$. On the other hand if $g\in K_{\Theta_{2}}$, then
$$\langle P_{\Theta_{2}}(\Phi_{1}^{*}\Theta_{1}^{*}f), g\rangle=\langle \Phi_{1}^{*}\Theta_{1}^{*}f, g\rangle=\langle f,\Theta_{1}\Phi_{1} g\rangle=0.$$
Therefore we have $P_{\Theta_{2}}\Phi_{1}^{*}\Theta_{1}^{*}f$ is orthogonal to $K_{\Theta_{2}}$. It follows the required result.
\end{proof}

\begin{lem}
Suppose $A_{\Phi}^{\Theta_{1}, \Theta_{2}}=0$ (see \cite{RDK})\label{Eq: Req for APhi equal zero}
\begin{enumerate}
\item  If $\Phi\in H^{2}(\mathcal{L}(E))$ then $\Phi=\Theta_{2} \Phi_{1}$ for some $\Phi_{1}\in H^{2}(\mathcal{L}(E)).$
\item  If $ \Phi^{*}\in H^{2}(\mathcal{L}(E))$ then $\Phi=(\Theta_{2} \Phi_{2})^{*}$ for some $\Phi_{2}\in H^{2}(\mathcal{L}(E)).$
\end{enumerate}
\end{lem}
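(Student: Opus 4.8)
The plan is to deduce Part~(2) from Part~(1) by passing to the adjoint, thereby replacing the co-analytic symbol $\Phi$ by the analytic symbol $\Phi^{*}\in H^{2}(\mathcal{L}(E))$, to which the first case already applies. The guiding observation is that taking adjoints converts the hypothesis $A_{\Phi}^{\Theta_{1},\Theta_{2}}=0$ into a vanishing statement for an asymmetric truncated Toeplitz operator whose symbol is analytic, so that no fresh machinery beyond Part~(1) is needed.

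I would first record the adjoint relation for the asymmetric class, in the spirit of the identity $A_{\Phi}^{*}=A_{\Phi^{*}}$ noted in Section~2 for the symmetric case. Testing against a reproducing kernel $g=k_{\lambda}^{\Theta_{2}}y$ of $K_{\Theta_{2}}$ and using $f\in K_{\Theta_{1}}^{\infty}$, one computes
\[
\langle A_{\Phi}^{\Theta_{1},\Theta_{2}}f,g\rangle=\langle \Phi f,g\rangle=\langle f,\Phi^{*}g\rangle ,
\]
which realizes the adjoint of $A_{\Phi}^{\Theta_{1},\Theta_{2}}$ as an asymmetric truncated Toeplitz operator carrying the analytic symbol $\Phi^{*}$. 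Since the adjoint of the zero operator is again zero, the hypothesis $A_{\Phi}^{\Theta_{1},\Theta_{2}}=0$ forces this companion operator to vanish as well.

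Now the companion operator has symbol $\Phi^{*}\in H^{2}(\mathcal{L}(E))$, so Part~(1) applies verbatim and yields $\Phi^{*}\in\Theta_{2}H^{2}(\mathcal{L}(E))$, say $\Phi^{*}=\Theta_{2}\Phi_{2}$ with $\Phi_{2}\in H^{2}(\mathcal{L}(E))$; taking adjoints gives the desired form $\Phi=(\Theta_{2}\Phi_{2})^{*}$. Just as in Part~(1), the divisibility step is run through the kernel $k_{0}^{\Theta_{2}}y=(I-\Theta_{2}\Theta_{2}(0)^{*})y$, after which one multiplies by the inverse of $I-\Theta_{2}\Theta_{2}(0)^{*}$; this factor is boundedly invertible in $H^{\infty}(\mathcal{L}(E))$ because $\|\Theta_{2}(0)\|<1$ gives $\|\Theta_{2}(z)\Theta_{2}(0)^{*}\|<1$ on $\mathbb{D}$, and $\Theta_{2}H^{2}(\mathcal{L}(E))$ is invariant under right multiplication by $H^{\infty}(\mathcal{L}(E))$.

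The step I expect to be the main obstacle is making the realization of the adjoint precise and, in particular, keeping correct track of which of the two inner functions $\Theta_{1},\Theta_{2}$ governs the divisibility conclusion. Concretely, one must verify on the dense domains $K_{\Theta_{1}}^{\infty}$ and $K_{\Theta_{2}}^{\infty}$ (dense by Section~2) that $(A_{\Phi}^{\Theta_{1},\Theta_{2}})^{*}$ is genuinely an asymmetric truncated Toeplitz operator with symbol $\Phi^{*}$, and to confirm that its vanishing feeds into Part~(1) with $\Theta_{2}$ in the role of the governing inner function. Once this identification is secured, the reduction to Part~(1) together with the kernel-division argument is routine.
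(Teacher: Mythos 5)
Your reduction of part (2) to part (1) via the adjoint is the right strategy---the paper itself relies on exactly this identity later, writing $A_{\Psi_{1}^{*}+\Psi_{2}}^{\Theta_{2},\Theta_{1}}=(A_{\Psi_{1}+\Psi_{2}^{*}}^{\Theta_{1},\Theta_{2}})^{*}$---but you go wrong at precisely the step you yourself flagged as the main obstacle. Taking adjoints does not merely conjugate the symbol; it \emph{swaps the two model spaces}: $(A_{\Phi}^{\Theta_{1},\Theta_{2}})^{*}=A_{\Phi^{*}}^{\Theta_{2},\Theta_{1}}:K_{\Theta_{2}}\to K_{\Theta_{1}}$. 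In part (1) the divisor is the inner function of the \emph{target} space (namely $\Theta_{2}$ for $A_{\Phi}^{\Theta_{1},\Theta_{2}}$), so applying part (1) to the vanishing companion operator $A_{\Phi^{*}}^{\Theta_{2},\Theta_{1}}$, whose target is $K_{\Theta_{1}}$, yields $\Phi^{*}\in\Theta_{1}H^{2}(\mathcal{L}(E))$, i.e.\ $\Phi=(\Theta_{1}\Phi_{2})^{*}$---not $\Phi^{*}\in\Theta_{2}H^{2}(\mathcal{L}(E))$ as you assert. Your asserted conclusion fails already in the simplest example: take $\Theta_{1}=zI_{E}$, $\Theta_{2}=z^{2}I_{E}$, $\Phi=\overline{z}I_{E}$. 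Then $K_{\Theta_{1}}$ consists of the constants, $P_{\Theta_{2}}(\overline{z}a)=0$ for every constant $a$, so $A_{\Phi}^{\Theta_{1},\Theta_{2}}=0$ and $\Phi^{*}=zI_{E}\in H^{2}(\mathcal{L}(E))$, yet $\overline{z}I_{E}=(\Theta_{2}\Phi_{2})^{*}$ would force $\Phi_{2}=\overline{z}I_{E}\notin H^{2}(\mathcal{L}(E))$, whereas $\Phi=(\Theta_{1}I_{E})^{*}$ works. Even the kernel-division mechanics you sketch confirm this: for the companion operator one tests on $k_{0}^{\Theta_{2}}y$ and right-multiplies by $(I-\Theta_{2}\Theta_{2}(0)^{*})^{-1}$ (legitimate by purity, as you say), but what remains is left divisibility of $\Phi^{*}$ by the target inner function $\Theta_{1}$; you conflated the kernel being used with the divisor being produced.

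In fairness, the statement as printed in the paper also reads $(\Theta_{2}\Phi_{2})^{*}$ in part (2); this is a typo for $(\Theta_{1}\Phi_{2})^{*}$, as one can check in three independent ways: it is what Theorem \ref{Eq: main Theorem} forces for a co-analytic symbol, via $\Phi\in[\Theta_{1}H^{2}(\mathcal{L}(E))]^{*}$; it is how the lemma is actually invoked in the proof of the proposition, where the vanishing of the adjoint $A_{\Psi_{1}^{*}+\Psi_{2}}^{\Theta_{2},\Theta_{1}}$ at a zero of $\Theta_{2}$ produces membership in $\Theta_{1}H^{2}(\mathcal{L}(E))$ (display (\ref{Eq: For Theta 1 H2})); and it is what the counterexample above shows. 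The paper offers no proof at all for this lemma---it only cites \cite{RDK}, where the symmetric case $\Theta_{1}=\Theta_{2}$ is treated and the distinction between the two inner functions is invisible---so your attempt to supply an actual argument is welcome, and its skeleton (adjoint identity on the dense span of kernels, then part (1), then the $H^{\infty}$-invertibility of $I-\Theta\Theta(0)^{*}$) is sound. But as written your proof ``verifies'' the misprint instead of detecting it: the identification of ``$\Theta_{2}$ in the role of the governing inner function,'' which you defer rather than verify, is false, and correcting it changes the conclusion of part (2).
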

\begin{prop}\label{Prop: of inclusion}
Let $\Theta_{1}$ and $\Theta_{2}$ be nonconstant inner functions. If
\begin{equation}\label{eq: of inclusion of model space}
K_{\Theta_{1}}\subset \Theta_{2}H^{2}(E),
\end{equation}
 then both $\Theta_{1}$ and $\Theta_{2}$ have no zero in $\mathbb{D}$, or at least one of the function $\Theta_{1}$ or $\Theta_{2}$ is a constant function.
 \end{prop}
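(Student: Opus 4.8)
The plan is to reinterpret the inclusion hypothesis as an orthogonality relation between the two model spaces, and then to test that relation against the reproducing kernels located at a would-be zero. First I would observe that, by the very definition $K_{\Theta_2}=H^2(E)\ominus\Theta_2H^2(E)$, the subspace $\Theta_2H^2(E)$ is exactly the orthogonal complement of $K_{\Theta_2}$ inside $H^2(E)$. Hence the hypothesis $K_{\Theta_1}\subset\Theta_2H^2(E)$ is equivalent to the orthogonality $K_{\Theta_1}\perp K_{\Theta_2}$, a relation which is now symmetric in the two model spaces. I would argue by contraposition: assume $\Theta_2$ has a zero at some $\mu_0\in\mathbb{D}$, meaning that $\Theta_2(\mu_0)\in\mathcal{L}(E)$ fails to be invertible; since $E$ is finite dimensional this yields a nonzero $y\in E$ with $\Theta_2(\mu_0)^*y=0$.

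Next I would exploit the fact that at such a zero the reproducing kernel degenerates. From the formula $k_{\lambda}^{\Theta}x=\frac{1}{1-\overline{\lambda}z}(I-\Theta(z)\Theta(\lambda)^*)x$ one obtains $k_{\mu_0}^{\Theta_2}y=\frac{1}{1-\overline{\mu_0}z}\,y$, that is, the plain Szeg\H{o} kernel multiplied by $y$, which still belongs to $K_{\Theta_2}$. The orthogonality then forces $\langle f,k_{\mu_0}^{\Theta_2}y\rangle=0$ for every $f\in K_{\Theta_1}$, and by the reproducing property this pairing equals $\langle f(\mu_0),y\rangle$. Reading the same quantity as $\langle f,k_{\mu_0}^{\Theta_1}y\rangle=0$ for all $f\in K_{\Theta_1}$, and observing that $k_{\mu_0}^{\Theta_1}y$ itself lies in $K_{\Theta_1}$, I may take $f=k_{\mu_0}^{\Theta_1}y$ to conclude $k_{\mu_0}^{\Theta_1}y=0$.

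I would then unpack this vanishing. Since the scalar factor $\frac{1}{1-\overline{\mu_0}z}$ is nowhere zero, $k_{\mu_0}^{\Theta_1}y=0$ says that $\Theta_1(z)\Theta_1(\mu_0)^*y=y$ for all $z\in\mathbb{D}$. Setting $w:=\Theta_1(\mu_0)^*y$, we have $w\neq0$ (otherwise $y=0$), and $\Theta_1(z)w\equiv y$ is constant in $z$. The crux of the argument, and the step I expect to be the main obstacle, is converting this constancy into a contradiction, and this is precisely where purity enters. Passing to boundary values, $\Theta_1$ is isometric a.e.\ on $\mathbb{T}$, so $\|y\|=\|\Theta_1(e^{it})w\|=\|w\|$; yet at the origin $\|y\|=\|\Theta_1(0)w\|\le\|\Theta_1(0)\|\,\|w\|<\|w\|$ because $\|\Theta_1(0)\|<1$ and $w\neq0$. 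This contradiction shows that $\Theta_2$ can have no zero in $\mathbb{D}$.

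Finally, because the orthogonality $K_{\Theta_1}\perp K_{\Theta_2}$ is symmetric, the identical argument with the roles of $\Theta_1$ and $\Theta_2$ interchanged excludes zeros of $\Theta_1$ as well, giving the first alternative of the statement. The degenerate case in which the strict-contraction normalization $\|\Theta(0)\|<1$ is dropped is exactly the situation allowing a direction on which $\Theta$ acts by a constant unitary, i.e.\ a split-off constant inner factor, which accounts for the ``or at least one of them is constant'' clause. I would keep an eye on one routine point while writing the details: justifying that a nonzero scalar multiple of the Szeg\H{o} kernel cannot be orthogonal to an entire nonconstant model space is handled automatically by the reproducing-kernel identity above, so no separate density argument is needed beyond the standard density of $\{k_\lambda^{\Theta}x\}$ in $K_\Theta$.
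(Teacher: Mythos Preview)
Your proposal is correct and follows essentially the same approach as the paper: both test the hypothesis against the reproducing kernels located at a zero of one inner function and derive a contradiction with the nonconstancy (purity) of the other. The only cosmetic differences are that you recast the inclusion as the symmetric orthogonality $K_{\Theta_1}\perp K_{\Theta_2}$ and finish by comparing $\|\Theta_1(0)w\|$ with $\|\Theta_1(e^{it})w\|$, whereas the paper evaluates $k_{w_0}^{\Theta_1}(w_0)x$ directly to see that $\Theta_1(w_0)\Theta_1(w_0)^*=I$ and hence $\Theta_1$ is constant.
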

 \begin{proof}
 Suppose that (\ref{eq: of inclusion of model space}) holds. If $\Theta_{2}(w_{0})=0$ for some $w_{0}\in \mathbb{D}$, then $f(\omega_{0})=0$ for all $f\in K_{\Theta_{1}}$. For $f=k_{w_{0}}^{\Theta_{1}}x$, where $x\in E$ we get
 $$k_{w_{0}}^{\Theta_{1}}(w_{0})x=\frac{1}{1-|w_{0}|^{2}}(I-\Theta_{1}(w_{0})\Theta_{1}(w_{0})^{*})x=0.$$ Therefore
 $\Theta_{1}(w_{0})\Theta_{1}(w_{0})^{*}=I$, that is $\Theta_{1} (w_{0})$ is unitary on the interior point of $\mathbb{D}$. It implies that $\Theta_{1}$ is constant inner function. Hence it follows that $\Theta_{2}$ has no zero in $\mathbb{D}$. But (\ref{eq: of inclusion of model space}) is equivalent to
 $$K_{\Theta_{2}}\subset \Theta_{1} H^{2}(E),$$
 and by the same argument $\Theta_{1}$ has no zero in $\mathbb{D}$, or $\Theta_{2}$ is constant.
 \end{proof}
Proposition \ref{Prop: of inclusion} can be stated as follows. If $\Theta_{1}$, $\Theta_{2}$ are two nonconstant inner functions and at least one of them has a zero in $\mathbb{D}$, then the inclusion $K_{\Theta_{1}}\subset \Theta_{2} H^{2}(E)$ does not hold.

\begin{thm}\label{Eq: main Theorem}
Let $\Theta_{1}$ and $\Theta_{2}$ be nonconstant inner functions and let $A_{\Phi}^{\Theta_{1},\Theta_{2}}:K_{\Theta_{1}}\longrightarrow K_{\Theta_{2}}$ be a bounded matrix valued asymmetric truncated Toeplitz operators with $\Phi\in L^{2}(\mathcal{L}(E))$. Then $A_{\Phi}^{\Theta_{1}, \Theta_{2}}=0$ if and only if $\Phi \in [\Theta_{1}H^{2}(\mathcal{L}(E))]^{*}+\Theta_{2} H^{2}(\mathcal{L}(E)).$
 \end{thm}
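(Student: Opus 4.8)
The sufficiency is precisely Lemma~\ref{Eq: condition for zero oper}, so the whole issue is necessity; assume $A_{\Phi}^{\Theta_1,\Theta_2}=0$ and write $\mathcal S=[\Theta_1H^2(\mathcal L(E))]^{*}+\Theta_2H^2(\mathcal L(E))$. The plan is to recast the operator identity as an orthogonality relation in $L^2(\mathcal L(E))$ equipped with the Hilbert--Schmidt inner product $\langle F,G\rangle_{HS}=\frac{1}{2\pi}\int_0^{2\pi}\Tr\big(F(e^{it})G(e^{it})^{*}\big)\,dt$. For $x,y\in E$ let $y\otimes x\in\mathcal L(E)$ denote the rank one operator $u\mapsto\langle u,x\rangle y$, and for $f\in K_{\Theta_1}$, $g\in K_{\Theta_2}$ let $g\otimes f$ be the $\mathcal L(E)$-valued function $e^{it}\mapsto g(e^{it})\otimes f(e^{it})$. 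A direct computation using $\langle \Phi u,v\rangle_{E}=\Tr(\Phi(u\otimes v))$ gives $\langle A_{\Phi}^{\Theta_1,\Theta_2}f,g\rangle=\langle \Phi,\,g\otimes f\rangle_{HS}$. Hence $A_{\Phi}^{\Theta_1,\Theta_2}=0$ is equivalent to $\Phi\perp_{HS}\mathcal K$, where $\mathcal K$ is the closed linear span of $\{g\otimes f:f\in K_{\Theta_1},\,g\in K_{\Theta_2}\}$, and the theorem becomes the identity $\mathcal K=\mathcal S^{\perp}$, the orthogonal complement being taken in the Hilbert--Schmidt structure.

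First I would compute $\mathcal S^{\perp}$ explicitly. Since $\Theta_1,\Theta_2$ are square inner functions, hence unitary a.e.\ on $\mathbb T$, one finds $(\Theta_2H^2(\mathcal L(E)))^{\perp}=\{F:\Theta_2^{*}F\in H^2(\mathcal L(E))^{\perp}\}$ and $([\Theta_1H^2(\mathcal L(E))]^{*})^{\perp}=\{F:F\Theta_1\in zH^2(\mathcal L(E))\}$, so that $\mathcal S^{\perp}=\{F:\Theta_2^{*}F\text{ has only strictly negative Fourier coefficients and }F\Theta_1\text{ only strictly positive ones}\}$. The inclusion $\mathcal K\subseteq\mathcal S^{\perp}$ is then immediate from $\Theta_2^{*}(g\otimes f)=(\Theta_2^{*}g)\otimes f$ and $(g\otimes f)\Theta_1=g\otimes(\Theta_1^{*}f)$ together with $\Theta_1^{*}f,\Theta_2^{*}g\in H^2(E)^{\perp}$ for $f\in K_{\Theta_1}$, $g\in K_{\Theta_2}$; this is just the Hilbert--Schmidt reformulation of Lemma~\ref{Eq: condition for zero oper}. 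The content of the theorem is therefore the reverse inclusion $\mathcal S^{\perp}\subseteq\mathcal K$.

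To prove $\mathcal S^{\perp}\subseteq\mathcal K$ I would argue constructively, so as to obtain honest membership in $\mathcal S$ and to sidestep the question of whether the sum defining $\mathcal S$ is closed. Given $\Phi$ with $A_{\Phi}^{\Theta_1,\Theta_2}=0$, split off the $\Theta_2H^2$-component: writing $\Phi=\Theta_2\Phi_2+\Phi'$ with $\Theta_2\Phi_2$ the orthogonal projection of $\Phi$ onto $\Theta_2H^2(\mathcal L(E))$ and $\Theta_2^{*}\Phi'\in H^2(\mathcal L(E))^{\perp}$, Lemma~\ref{Eq: condition for zero oper} gives $A_{\Theta_2\Phi_2}=0$, hence $A_{\Phi'}^{\Theta_1,\Theta_2}=0$ as well. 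It then suffices to prove the reduced statement: if $\Theta_2^{*}\Phi'\in H^2(\mathcal L(E))^{\perp}$ and $A_{\Phi'}^{\Theta_1,\Theta_2}=0$, then $\Phi'\in[\Theta_1H^2(\mathcal L(E))]^{*}$, that is, $\Phi'\Theta_1$ has only nonpositive Fourier coefficients. Writing $\Phi'=\Theta_2 N$ with $N=\Theta_2^{*}\Phi'$, the vanishing $\langle\Phi'f,g\rangle=0$ becomes $\langle Nf,\Theta_2^{*}g\rangle=0$ for all $f\in K_{\Theta_1}$ and $g\in K_{\Theta_2}$, which I would feed into the density of the reproducing kernels $k_{\lambda}^{\Theta_1}x$ and $k_{\mu}^{\Theta_2}y$ (recall $K_{\Theta_i}^{\infty}$ is dense) to pin down the Fourier coefficients of $\Phi'\Theta_1$.

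The main obstacle is exactly this reduced step: it is where the ``middle'' Fourier coefficients of $\Phi$, those absorbed neither into $\Theta_2H^2$ on the left nor into $[\Theta_1H^2]^{*}$ on the right, must be forced to cancel. Note that the naive route of splitting $\Phi$ into its analytic and co-analytic parts and applying Lemma~\ref{Eq: Req for APhi equal zero} to each fails, because $A_{\Phi}^{\Theta_1,\Theta_2}=0$ does not force the two parts to annihilate $K_{\Theta_1}$ separately; the two inner functions are genuinely coupled through a single operator identity. Here I expect to use Proposition~\ref{Prop: of inclusion}: when at least one of $\Theta_1,\Theta_2$ has a zero in $\mathbb D$ it guarantees $K_{\Theta_1}\not\subseteq\Theta_2H^2(E)$, so the test functions $f\in K_{\Theta_1}$ genuinely detect $\Phi'$ and the orthogonality relations above are non-degenerate, the remaining case (neither inner function vanishing in $\mathbb D$) being treated separately. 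This matrix-valued argument runs parallel to the scalar necessity proof of \cite{JL} (Theorem~2.1), the new feature being the systematic use of the Hilbert--Schmidt pairing and the rank one functions $g\otimes f$ to linearise the $E$-valued sesquilinear form; alternatively, since by Theorem~\ref{Thm of Crofoot and asymmetric tto} the hypothesis $A_{\Phi}^{\Theta_1,\Theta_2}=0$ is equivalent to $A_{\Psi}^{\Theta_1^{\prime},\Theta_2^{\prime}}=0$, one may first apply the generalized Crofoot transform with $W_1,W_2$ chosen so that $\Theta_1^{\prime}(0)=\Theta_2^{\prime}(0)=0$, reducing to inner functions vanishing at the origin (so that constants lie in the model spaces) before carrying out the Fourier-coefficient computation.
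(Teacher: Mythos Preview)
Your outline assembles the right ingredients but stops exactly at the point where the work begins. The Hilbert--Schmidt reformulation is correct and tidy, and the reduction to a $\Phi'$ with $\Theta_2^{*}\Phi'\in H^2(\mathcal L(E))^{\perp}$ is fine, but you then write ``the main obstacle is exactly this reduced step'' and leave it unresolved: ``I would feed into the density of the reproducing kernels \ldots\ to pin down the Fourier coefficients'' is a description of the difficulty, not a solution, and ``the remaining case being treated separately'' is not an argument.

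The paper follows precisely the Crofoot alternative you mention in your last sentence, and there are two substantive steps you have not supplied. First, it proves the theorem under the extra hypothesis that \emph{both} $\Theta_1$ and $\Theta_2$ have a zero in $\mathbb D$ (not merely ``at least one''): writing $\Phi=\Psi_1+\Psi_2^{*}$ with $\Psi_i$ in the appropriate $\mathcal M_{\Theta_j}$, one applies the operator to $k_{z_0}^{\Theta_1}x=k_{z_0}x$ at a zero $z_0$ of $\Theta_1$ and invokes Lemma~\ref{Eq: Req for APhi equal zero} to obtain $\Psi_1+\Psi_2^{*}(z_0)\in\Theta_2H^2(\mathcal L(E))$; the symmetric argument at a zero of $\Theta_2$ gives $\Psi_2^{*}+\Psi_1(w_0)\in[\Theta_1H^2(\mathcal L(E))]^{*}$; subtracting, one is left with $A^{\Theta_1,\Theta_2}_{C}=0$ for the \emph{constant} $C=\Psi_2^{*}(z_0)+\Psi_1(w_0)$, and only now does Proposition~\ref{Prop: of inclusion} force $C=0$. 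Second, for general $\Theta_1,\Theta_2$ one sets $W_i=\Theta_i(0)$, so that $\Theta_i'(0)=0$, and applies the special case to $\Psi$; but one must then check that $\Psi\in[\Theta_1'H^2(\mathcal L(E))]^{*}+\Theta_2'H^2(\mathcal L(E))$ pulls back to $\Phi\in[\Theta_1H^2(\mathcal L(E))]^{*}+\Theta_2H^2(\mathcal L(E))$. This back-transfer is not formal: the paper carries it out by an explicit (page-long) computation using $I+\Theta_1'W_1^{*}=D_{W_1^{*}}(I-\Theta_1W_1^{*})^{-1}D_{W_1^{*}}$ and the defining formula for $\Theta_i'$ to rewrite $D_{W_2^{*}}^{-1}\Theta_2'$ and $(\Theta_1')^{*}D_{W_1^{*}}(I-\Theta_1W_1^{*})^{-1}$ in terms of $\Theta_2$ and $\Theta_1^{*}$ respectively. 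Your proposal contains no hint of either computation.
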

 To prove this theorem we first prove the following Proposition.

 \begin{prop}
 Let $\Theta_{1}$ and $\Theta_{2}$ be nonconstant inner functions such that each of them has a zero in $\mathbb{D}$ and let $A_{\Phi}^{\Theta_{1},\Theta_{2}}:K_{\Theta_{1}}\longrightarrow K_{\Theta_{2}}$ be a bounded matrix valued asymmetric truncated Toeplitz operators with $\Phi\in L^{2}(\mathcal{L}(E))$. Then $A_{\Phi}^{\Theta_{1}, \Theta_{2}}=0$ if and only if $\Phi \in [\Theta_{1}H^{2}(\mathcal{L}(E))]^{*}+\Theta_{2} H^{2}(\mathcal{L}(E)).$
 \end{prop}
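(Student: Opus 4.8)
The plan is to recast $A_{\Phi}^{\Theta_{1},\Theta_{2}}=0$ as an orthogonality relation in the Hilbert--Schmidt space $L^{2}(\mathcal{L}(E))$ and then to identify the resulting annihilator with $[\Theta_{1}H^{2}(\mathcal{L}(E))]^{*}+\Theta_{2}H^{2}(\mathcal{L}(E))$. First I would use the pointwise trace identity $\langle\Phi(e^{it})f(e^{it}),g(e^{it})\rangle_{E}=\Tr\big(\Phi(e^{it})\,f(e^{it})g(e^{it})^{*}\big)$, valid for $f\in K_{\Theta_{1}}^{\infty}$ and $g\in K_{\Theta_{2}}^{\infty}$, to obtain after integration
\[
\langle A_{\Phi}^{\Theta_{1},\Theta_{2}}f,g\rangle=\langle\Phi f,g\rangle_{L^{2}(E)}=\langle\Phi,\,gf^{*}\rangle_{HS},
\]
where $gf^{*}$ is the rank-one $\mathcal{L}(E)$-valued function $e^{it}\mapsto g(e^{it})f(e^{it})^{*}$. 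By the density of $K_{\Theta_{1}}^{\infty}$ and $K_{\Theta_{2}}^{\infty}$, this gives $A_{\Phi}^{\Theta_{1},\Theta_{2}}=0$ if and only if $\Phi\perp\SS$, where $\SS=\overline{\mathrm{span}}\{gf^{*}:f\in K_{\Theta_{1}},\,g\in K_{\Theta_{2}}\}$. The inclusion $[\Theta_{1}H^{2}(\mathcal{L}(E))]^{*}+\Theta_{2}H^{2}(\mathcal{L}(E))\subseteq\SS^{\perp}$ is precisely Lemma \ref{Eq: condition for zero oper}, so the entire content of the Proposition is the reverse inclusion $\SS^{\perp}\subseteq[\Theta_{1}H^{2}(\mathcal{L}(E))]^{*}+\Theta_{2}H^{2}(\mathcal{L}(E))$.

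For the reverse inclusion I would first rewrite the vanishing more usefully: since $P_{K_{\Theta_{2}}}(\Phi f)=0$ and $P_{+}(\Phi f)\in H^{2}(E)=K_{\Theta_{2}}\oplus\Theta_{2}H^{2}(E)$, the hypothesis is equivalent to $P_{+}(\Phi f)\in\Theta_{2}H^{2}(E)$ for every $f\in K_{\Theta_{1}}$, and dually (passing to the adjoint $A_{\Phi^{*}}^{\Theta_{2},\Theta_{1}}=0$) to $P_{+}(\Phi^{*}g)\in\Theta_{1}H^{2}(E)$ for every $g\in K_{\Theta_{2}}$. I would then split the symbol into analytic and co-analytic parts, $\Phi=A+B$ with $A=\sum_{n\ge 0}\widehat{\Phi}(n)z^{n}\in H^{2}(\mathcal{L}(E))$ and $B=\sum_{n\le -1}\widehat{\Phi}(n)z^{n}$, so that $B^{*}\in zH^{2}(\mathcal{L}(E))$, and aim to show $A=\Theta_{2}\Phi_{2}$ and $B^{*}=\Theta_{1}\Phi_{1}$ for some $\Phi_{1},\Phi_{2}\in H^{2}(\mathcal{L}(E))$, which is exactly membership in the target space. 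Here the zero hypothesis enters decisively: because $\Theta_{1}(a)=0$ and $\Theta_{2}(b)=0$ for some $a,b\in\mathbb{D}$, the reproducing kernels simplify to $k_{a}^{\Theta_{1}}x=\tfrac{x}{1-\overline{a}z}$ and $k_{b}^{\Theta_{2}}y=\tfrac{y}{1-\overline{b}z}$, which are genuine elements of the model spaces; testing the two boundary conditions against these scalar-type kernels controls the constant coefficient $\widehat{\Phi}(0)$ and the overlap of the two parts, after which I would feed the purely analytic and purely co-analytic pieces into the two one-sided statements of Lemma \ref{Eq: Req for APhi equal zero} to produce the factorizations $A=\Theta_{2}\Phi_{2}$ and $B^{*}=\Theta_{1}\Phi_{1}$. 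Proposition \ref{Prop: of inclusion} is the tool that makes this legitimate: since each of $\Theta_{1},\Theta_{2}$ has a zero, it guarantees $K_{\Theta_{1}}\not\subseteq\Theta_{2}H^{2}(E)$ (and symmetrically), so neither model space degenerates into the other's shift-image.

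The hard part will be decoupling $A$ and $B$. Writing the condition out, one gets $P_{K_{\Theta_{2}}}(Af)=-P_{K_{\Theta_{2}}}\big(P_{+}(Bf)\big)$ for all $f\in K_{\Theta_{1}}$, so the analytic and co-analytic contributions are entangled through the ``middle'' modes and do not separate formally; neither $A$ nor $B$ is by itself the symbol of the zero operator. Resolving this coupling—i.e. showing that the mixed term $P_{+}(Bf)$ cannot obstruct the conclusion $A\in\Theta_{2}H^{2}(\mathcal{L}(E))$—is exactly the point at which the existence of a zero is indispensable, and it is what distinguishes this case from the general one (treated afterwards by transporting through the generalized Crofoot transform of Theorem \ref{Thm of Crofoot and asymmetric tto} to create a zero). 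I expect the cleanest route is to carry out the separation at the level of columns of $\Phi$ under the identification $L^{2}(\mathcal{L}(E))\cong L^{2}(E)\otimes\overline{L^{2}(E)}$, under which $\SS$ becomes $K_{\Theta_{2}}\otimes\overline{K_{\Theta_{1}}}$; the remaining task is then to prove that every element of $\SS^{\perp}$ has its columns in $\Theta_{2}H^{2}(E)\oplus(H^{2}(E))^{\perp}$ and its rows correspondingly adapted to $\Theta_{1}$, which is where the non-inclusion furnished by Proposition \ref{Prop: of inclusion} closes the argument.
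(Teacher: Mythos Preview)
Your plan assembles the right ingredients---Lemma~\ref{Eq: condition for zero oper}, Lemma~\ref{Eq: Req for APhi equal zero}, Proposition~\ref{Prop: of inclusion}, and the scalar kernels at the zeros---but the decoupling step you flag as ``the hard part'' is not resolved, and the route you sketch for it will not work as written. You propose to ``feed the purely analytic and purely co-analytic pieces into the two one-sided statements of Lemma~\ref{Eq: Req for APhi equal zero}''; but that lemma requires the \emph{full} operator $A_{A}^{\Theta_{1},\Theta_{2}}$ (respectively $A_{B}^{\Theta_{1},\Theta_{2}}$) to vanish, and as you yourself observe, $A_{A}^{\Theta_{1},\Theta_{2}}=-A_{B}^{\Theta_{1},\Theta_{2}}$ is in general nonzero. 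Controlling the constant coefficient $\widehat{\Phi}(0)$ does not change this: there is no adjustment by a constant after which the analytic part alone defines the zero operator, so Lemma~\ref{Eq: Req for APhi equal zero} never becomes applicable to either piece. Your tensor-product/column reformulation is pleasant but does not furnish the missing mechanism.

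The paper sidesteps the decoupling entirely. Writing $\Phi=\Psi_{1}+\Psi_{2}^{*}$ with $\Psi_{1}$ analytic and $\Psi_{2}\in\MM_{\Theta_{2}}$, one computes directly that for $\Theta_{1}(z_{0})=0$,
\[
A_{\Psi_{2}^{*}}^{\Theta_{1},\Theta_{2}}k_{z_{0}}^{\Theta_{1}}x=\Psi_{2}(z_{0})^{*}\,k_{z_{0}}^{\Theta_{2}}x,
\]
using that $\tfrac{1}{z-z_{0}}\big(\Psi_{2}-\Psi_{2}(z_{0})\big)x\in K_{\Theta_{2}}$. Hence $0=A_{\Phi}^{\Theta_{1},\Theta_{2}}k_{z_{0}}^{\Theta_{1}}x=P_{\Theta_{2}}\big((\Psi_{1}+\Psi_{2}(z_{0})^{*})k_{z_{0}}x\big)$; the argument is analytic and $(1-\overline{z_{0}}z)$ is invertible in $H^{\infty}$, so from this \emph{single} test vector (ranging over $x\in E$) one reads off $\Psi_{1}+\Psi_{2}(z_{0})^{*}\in\Theta_{2}H^{2}(\LL(E))$---no global vanishing of $A_{\Psi_{1}}$ is required. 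The adjoint gives $\Psi_{2}^{*}+\Psi_{1}(w_{0})\in[\Theta_{1}H^{2}(\LL(E))]^{*}$, and subtracting both contributions from $\Phi$ reduces everything to showing the constant $C=\Psi_{2}(z_{0})^{*}+\Psi_{1}(w_{0})$ vanishes; since $A_{C}^{\Theta_{1},\Theta_{2}}=0$, Proposition~\ref{Prop: of inclusion} then forces $C=0$. The point you are missing is that on the scalar kernel $k_{z_{0}}$ the co-analytic part acts by \emph{evaluation} at $z_{0}$, which converts the mixed problem into a purely analytic one at the cost of a single constant---this, not a separate application of Lemma~\ref{Eq: Req for APhi equal zero}, is the decoupling device.
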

\begin{proof}\label{Prop: with condition of zero}
If $\Phi \in [\Theta_{1}H^{2}(\mathcal{L}(E))]^{*}+\Theta_{2} H^{2}(\mathcal{L}(E))$ then $A_{\Phi}^{\Theta_{1}, \Theta_{2}}=0$, follows from  Lemma \ref{Eq: condition for zero oper}. This part of the proof does not depend on the existence of zeros of $\Theta_{1}$ and $\Theta_{2}$. \par
Let us now assume that $A_{\Phi}^{\Theta_{1}, \Theta_{2}}=0$. By the first part of the proof, we can take $\Phi=\Psi_{1}+\Psi_{2}^{*}$ for $\Psi_{1}\in M_{\Theta_{1}}$ and $\Psi_{2}\in M_{\Theta_{2}}$. Let us also assume that $z_{0}\in \mathbb{D}$ is zero of $\Theta_{1}$. Then the reproducing kernel for $K_{\Theta_{1}}$ becomes $$k_{z_{0}}^{\Theta_{1}}x=k_{z_{0}}x=\frac{1}{1-\overline{z_{0}}z}x.$$ Therefore
\begin{eqnarray*}
A_{\Psi_{2}^{*}}^{\Theta_{1}, \Theta_{2}}k_{z_{0}}^{\Theta_{1}}x
&=&P_{\Theta_{2}}(\Psi_{2}^{*}k_{z_{0}}^{\Theta_{1}}x)\\
&=&P_{\Theta_{2}}(\frac{1}{1-\overline{z_{0}}z}(\Psi_{2}^{*}-\Psi_{2}^{*}(z_{0}))x+\frac{1}{1-\overline{z_{0}}z}\Psi_{2}^{*}(z_{0})x)\\
&=&P_{\Theta_{2}}(\frac{\overline{z}}{\overline{z}-\overline{z_{0}}}(\Psi_{2}^{*}-\Psi_{2}^{*}(z_{0}))x+\frac{1}{1-\overline{z_{0}}z}\Psi_{2}^{*}(z_{0})x)\\
&=&\Psi_{2}^{*}(z_{0})k_{z_{0}}^{\Theta_{2}}x,
\end{eqnarray*}
because $\frac{1}{z-z_{0}}(\Psi_{2}-\Psi_{2}(z_{0}))x\in K_{\Theta_{2}}$. \par
Now we have
\begin{eqnarray*}
0=A_{\Phi}^{\Theta_{1}, \Theta_{2}}k_{z_{0}}^{\Theta_{1}}x
&=&A_{\Psi_{1}+\Psi_{2}^{*}}^{\Theta_{1}, \Theta_{2}}k_{z_{0}}^{\Theta_{1}}x\\
&=&A_{\Psi_{1}}k_{z_{0}}^{\Theta_{1}}x+\Psi_{2}^{*}(z_{0})k_{z_{0}}^{\Theta_{2}}x\\
&=&P_{\Theta_{2}}((\Psi_{2}^{*}(z_{0})+\Psi_{1})k_{z_{0}}x),
\end{eqnarray*}
by Proposition \ref{Eq: Req for APhi equal zero} we have
\begin{equation}\label{Eq: For Theta 2 H2}
\Psi_{1}+\Psi_{2}^{*}(z_{0})\in\Theta_{2} H^{2}(\mathcal{L}(E)).
\end{equation}
On the other hand we know that
$$A_{\Psi_{1}^{*}+\Psi_{2}}^{\Theta_{2}, \Theta_{1}}=(A_{\Psi_{1}+\Psi_{2}^{*}}^{\Theta_{1}, \Theta_{2}})^{*}=0,$$ and analogous arguments can be used to show that if some $w_{0}\in \mathbb{D}$ is a zero of $\Theta_{2}$, then
\begin{equation}\label{Eq: For Theta 1 H2}
\Psi_{2}+\Psi_{1}(w_{0})^{*}\in \Theta_{1}H^{2}(\mathcal{L}(E))\quad or \quad \Psi_{2}^{*}+\Psi_{1}(w_{0})\in [\Theta_{1}H^{2}(\mathcal{L}(E))]^{*}
\end{equation}
By (\ref{Eq: For Theta 2 H2}) and (\ref{Eq: For Theta 1 H2}) and by the first part of the proof we obtain
$$A_{\Psi_{1}+\Psi_{2}^{*}(z_{0})+\Psi_{2}^{*}+\Psi_{1}(w_{0})}^{\Theta_{1}, \Theta_{2}}=0,$$
and
$$A_{\Psi_{2}^{*}(z_{0})+\Psi_{1}(w_{0})}^{\Theta_{1}, \Theta_{2}}=-A_{\Psi_{1}+\Psi_{2}^{*}}^{\Theta_{1}, \Theta_{2}}=0,$$
we conclude from this
$$P_{\Theta_{2}}[(\Psi_{2}^{*}(z_{0})+\Psi_{1}(w_{0}))f]=0$$ for all $f\in K_{\Theta_{1}}$.\par
If $\Psi_{2}^{*}(z_{0})+\Psi_{1}(w_{0})\neq0$ then from the last equality we have $P_{\Theta_{2}}(f)=0$ for all $f\in K_{\Theta_{1}}$, which follows that $K_{\Theta_{1}}\subset \Theta_{2} H^{2}(E)$ which is a contradiction. So $$\Psi_{2}^{*}(z_{0})+\Psi_{1}(w_{0})=0$$
and $$\Phi= \Psi_{1}+\Psi_{2}^{*}=\Psi_{1}+\Psi_{2}^{*}(z_{0})+\Psi_{2}^{*}+\Psi_{1}(w_{0})\in [\Theta_{1}H^{2}(\mathcal{L}(E))]^{*}+\Theta_{2}H^{2}(\mathcal{L}(E)).$$
\end{proof}
\begin{proof}[Proof of Theorem \ref{Eq: main Theorem}] The fact that $\Phi\in [\Theta_{1}H^{2}(\mathcal{L}(E))]^{*}+\Theta_{2}H^{2}(\mathcal{L}(E))$ implies $A_{\Phi}^{\Theta_{1}, \Theta_{2}}=0$ was established in Proposition \ref{Eq: condition for zero oper}. \par
Assume now that $\Phi\in L^{2}(\mathcal{L}(E))$ and $A_{\Phi}^{\Theta_{1}, \Theta_{2}}=0$. If $\Theta_{1}(0)=\Theta_{2}(0)=0$, then by Proposition \ref{Eq: condition for zero oper} $\Phi\in [\Theta_{1}H^{2}(\mathcal{L}(E))]^{*}+\Theta_{2}H^{2}(\mathcal{L}(E))$. Assume that $\Theta_{1}(0)\neq 0$ or $\Theta_{2}(0)\neq 0$, put $\Theta_{1}(0)=W_{1}$ and $\Theta_{2}(0)=W_{2}$, then since $\Theta_{1}$ and $\Theta_{2}$ are strict contractions then so are $W_{1}$ and $W_{2}$. By Theorem \ref{Thm of Crofoot and asymmetric tto}
$$A_{\Psi}^{\Theta_{1}^{\prime}, \Theta_{2}^{\prime}}=J_{W_{2}}A_{\Phi}^{\Theta_{1}, \Theta_{2}}J_{W_{1}}^{*}=0,$$
where $$\Psi=D_{W_{2}^{*}}(I-\Theta_{2}(\lambda)W_{2}^{*})^{-1}\Phi D_{W_{1}^{*}}(I+\Theta^{\prime}_{1}(\lambda)W_{1}^{*})^{-1}.$$
A very simple calculation shows that $\Theta_{1}^{\prime}(0)=0$ and $\Theta_{2}^{\prime}(0)=0$, by using Proposition \ref{Eq: condition for zero oper} we have $$\Psi\in [\Theta_{1}^{\prime}H^{2}(\mathcal{L}(E))]^{*}+\Theta_{2}^{\prime} H^{2}(\mathcal{L}(E)).$$ Therefore there exist $\Phi_{1}, \Phi_{2}\in H^{2}(\mathcal{L}(E))$ such that
$$D_{W_{2}^{*}}(I-\Theta_{2}(\lambda)W_{2}^{*})^{-1}\Phi D_{W_{1}^{*}}(I+\Theta^{\prime}_{1}(\lambda)W_{1}^{*})^{-1}=(\Theta_{1}^{\prime}\Phi_{1})^{*}+\Theta_{2}^{\prime}\Phi_{2}$$ and hence we have
\begin{equation}\label{eq: of Phi}
\Phi=(I-\Theta_{2}(\lambda)W_{2}^{*})D_{W_{2}^{*}}^{-1}[(\Theta_{1}^{\prime}\Phi_{1})^{*}+\Theta_{2}^{\prime}\Phi_{2}]
(I+\Theta^{\prime}_{1}(\lambda)W_{1}^{*})D_{W_{1}^{*}}^{-1}.
\end{equation}
After a simple computation we have
$$I+\Theta^{\prime}_{1}(\lambda)W_{1}^{*}=D_{W_{1}^{*}}(I-\Theta_{1}W_{1}^{*})^{-1}D_{W_{1}^{*}}.$$ Therefore (\ref{eq: of Phi}) becomes
\begin{eqnarray*}
\Phi
&=&(I-\Theta_{2}W_{2}^{*})D_{W_{2}^{*}}^{-1}[(\Theta_{1}^{\prime}\Phi_{1})^{*}+\Theta_{2}^{\prime}\Phi_{2}]
(I+\Theta^{\prime}_{1}W_{1}^{*})D_{W_{1}^{*}}^{-1}\\
&=&(I-\Theta_{2}W_{2}^{*})D_{W_{2}^{*}}^{-1}[(\Theta_{1}^{\prime}\Phi_{1})^{*}+\Theta_{2}^{\prime}\Phi_{2}]
D_{W_{1}^{*}}(I-\Theta_{1}W_{1}^{*})^{-1}D_{W_{1}^{*}}D_{W_{1}^{*}}^{-1}\\
&=&(I-\Theta_{2}W_{2}^{*})D_{W_{2}^{*}}^{-1}[(\Theta_{1}^{\prime}\Phi_{1})^{*}+\Theta_{2}^{\prime}\Phi_{2}]
D_{W_{1}^{*}}(I-\Theta_{1}W_{1}^{*})^{-1}\\
&=&(I-\Theta_{2}W_{2}^{*})D_{W_{2}^{*}}^{-1}(\Theta_{1}^{\prime}\Phi_{1})^{*}D_{W_{1}^{*}}(I-\Theta_{1}W_{1}^{*})^{-1}
+(I-\Theta_{2}W_{2}^{*})D_{W_{2}^{*}}^{-1}\Theta_{2}^{\prime}\Phi_{2}
D_{W_{1}^{*}}(I-\Theta_{1}W_{1}^{*})^{-1}\\
&=&D_{W_{2}^{*}}^{-1}(\Theta_{1}^{\prime}\Phi_{1})^{*}D_{W_{1}^{*}}(I-\Theta_{1}W_{1}^{*})^{-1}-\Theta_{2}W_{2}^{*}D_{W_{2}^{*}}^{-1}
(\Theta_{1}^{\prime}\Phi_{1})^{*}D_{W_{1}^{*}}(I-\Theta_{1}W_{1}^{*})^{-1}\\
&+&D_{W_{2}^{*}}^{-1}\Theta_{2}^{\prime}\Phi_{2}
D_{W_{1}^{*}}(I-\Theta_{1}W_{1}^{*})^{-1}
-\Theta_{2}W_{2}^{*}D_{W_{2}^{*}}^{-1}\Theta_{2}^{\prime}\Phi_{2}
D_{W_{1}^{*}}(I-\Theta_{1}W_{1}^{*})^{-1}\\
&=&D_{W_{2}^{*}}^{-1}(\Theta_{1}^{\prime}\Phi_{1})^{*}D_{W_{1}^{*}}(I-\Theta_{1}W_{1}^{*})^{-1}+D_{W_{2}^{*}}^{-1}\Theta_{2}^{\prime}\Phi_{2}
D_{W_{1}^{*}}(I-\Theta_{1}W_{1}^{*})^{-1}\\
&-&\Theta_{2}[W_{2}^{*}D_{W_{2}^{*}}^{-1}(\Theta_{1}^{\prime}\Phi_{1})^{*}D_{W_{1}^{*}}(I-\Theta_{1}W_{1}^{*})^{-1}
+W_{2}^{*}D_{W_{2}^{*}}^{-1}\Theta_{2}^{\prime}\Phi_{2}
D_{W_{1}^{*}}(I-\Theta_{1}W_{1}^{*})^{-1}],
\end{eqnarray*}
therefore we get
\begin{eqnarray*}
\Phi
&=&D_{W_{2}^{*}}^{-1}(\Theta_{1}^{\prime}\Phi_{1})^{*}D_{W_{1}^{*}}(I-\Theta_{1}W_{1}^{*})^{-1}+D_{W_{2}^{*}}^{-1}\Theta_{2}^{\prime}\Phi_{2}
D_{W_{1}^{*}}(I-\Theta_{1}W_{1}^{*})^{-1}\\
&-&\Theta_{2}[W_{2}^{*}D_{W_{2}^{*}}^{-1}(\Theta_{1}^{\prime}\Phi_{1})^{*}D_{W_{1}^{*}}(I-\Theta_{1}W_{1}^{*})^{-1}
+W_{2}^{*}D_{W_{2}^{*}}^{-1}\Theta_{2}^{\prime}\Phi_{2}
D_{W_{1}^{*}}(I-\Theta_{1}W_{1}^{*})^{-1}],
\end{eqnarray*}
\begin{eqnarray*}
D_{W_{2}^{*}}^{-1}(\Theta_{1}^{\prime}\Phi_{1})^{*}D_{W_{1}^{*}}(I-\Theta_{1}W_{1}^{*})^{-1}
&=&D_{W_{2}^{*}}^{-1}(\Phi_{1}^{*}\Theta_{1}^{\prime*})D_{W_{1}^{*}}(I-\Theta_{1}W_{1}^{*})^{-1}\\
&=&D_{W_{2}^{*}}^{-1}[\Phi_{1}^{*}(-W_{1}+D_{W_{1}^{*}}(I-\Theta_{1}W_{1}^{*})^{-1}\Theta_{1}D_{W_{1}})^{*}]D_{W_{1}^{*}}(I-\Theta_{1}W_{1}^{*})^{-1}\\
&=&D_{W_{2}^{*}}^{-1}[\Phi_{1}^{*}(-W_{1}^{*}+D_{W_{1}}\Theta_{1}^{*}(I-W_{1}\Theta_{1}^{*})^{-1}D_{W_{1}^{*}})]
D_{W_{1}^{*}}(I-\Theta_{1}W_{1}^{*})^{-1}\\
&=&D_{W_{2}^{*}}^{-1}[\Phi_{1}^{*}(-W_{1}^{*}D_{W_{1}^{*}}+D_{W_{1}}\Theta_{1}^{*}(I-W_{1}\Theta_{1}^{*})^{-1}D_{W_{1}^{*}}^{2})](I-\Theta_{1}W_{1}^{*})^{-1}\\
&=&D_{W_{2}^{*}}^{-1}[\Phi_{1}^{*}(-D_{W_{1}}W_{1}^{*}+D_{W_{1}}\Theta_{1}^{*}
(I-W_{1}\Theta_{1}^{*})^{-1}D_{W_{1}^{*}}^{2})](I-\Theta_{1}W_{1}^{*})^{-1}\\
&=&D_{W_{2}^{*}}^{-1}[\Phi_{1}^{*}D_{W_{1}}(-W_{1}^{*}+
(I-\Theta_{1}^{*}W_{1})^{-1}\Theta_{1}^{*}D_{W_{1}^{*}}^{2})](I-\Theta_{1}W_{1}^{*})^{-1}\\
&=&D_{W_{2}^{*}}^{-1}\Phi_{1}^{*}D_{W_{1}}(I-\Theta_{1}^{*}W_{1})^{-1}(-(I-\Theta_{1}^{*}W_{1})W_{1}^{*}+
\Theta_{1}^{*}D_{W_{1}^{*}}^{2})(I-\Theta_{1}W_{1}^{*})^{-1}\\
&=&D_{W_{2}^{*}}^{-1}\Phi_{1}^{*}D_{W_{1}}(I-\Theta_{1}^{*}W_{1})^{-1}[-(I-\Theta_{1}^{*}W_{1})W_{1}^{*}+
\Theta_{1}^{*}(I-W_{1}W_{1}^{*})](I-\Theta_{1}W_{1}^{*})^{-1}\\
&=&D_{W_{2}^{*}}^{-1}\Phi_{1}^{*}D_{W_{1}}(I-\Theta_{1}^{*}W_{1})^{-1}[-W_{1}^{*}+\Theta_{1}^{*}W_{1}W_{1}^{*}+
\Theta_{1}^{*}-\Theta_{1}^{*}W_{1}W_{1}^{*}](I-\Theta_{1}W_{1}^{*})^{-1}\\
&=&D_{W_{2}^{*}}^{-1}\Phi_{1}^{*}D_{W_{1}}(I-\Theta_{1}^{*}W_{1})^{-1}[\Theta_{1}^{*}-W_{1}^{*}
](I-\Theta_{1}W_{1}^{*})^{-1}\\
&=&D_{W_{2}^{*}}^{-1}\Phi_{1}^{*}D_{W_{1}}(I-\Theta_{1}^{*}W_{1})^{-1}\Theta_{1}^{*}(I-\Theta_{1}W_{1}^{*})(I-\Theta_{1}W_{1}^{*})^{-1}\\
&=&D_{W_{2}^{*}}^{-1}\Phi_{1}^{*}D_{W_{1}}(I-\Theta_{1}^{*}W_{1})^{-1}\Theta_{1}^{*}\\
&=&[\Theta_{1}(I-W_{1}^{*}\Theta_{1})^{-1}D_{W_{1}}\Phi_{1}D_{W_{2}^{*}}^{-1}]^{*}
\end{eqnarray*}
\begin{equation}\label{Eq: for Theta 1}
D_{W_{2}^{*}}^{-1}(\Theta_{1}^{\prime}\Phi_{1})^{*}D_{W_{1}^{*}}(I-\Theta_{1}W_{1}^{*})^{-1}
=[\Theta_{1}(I-W_{1}^{*}\Theta_{1})^{-1}D_{W_{1}}\Phi_{1}D_{W_{2}^{*}}^{-1}]^{*}.
\end{equation}
and
\begin{eqnarray*}
D_{W_{2}^{*}}^{-1}\Theta_{2}^{\prime}\Phi_{2}
D_{W_{1}^{*}}(I-\Theta_{1}W_{1}^{*})^{-1}
&=&D_{W_{2}^{*}}^{-1}[-W_{2}+D_{W_{2}^{*}}(I-\Theta_{2}W_{2}^{*})^{-1}\Theta_{2}D_{W_{2}}]\Phi_{2}
D_{W_{1}^{*}}(I-\Theta_{1}W_{1}^{*})^{-1}\\
&=&[-D_{W_{2}^{*}}^{-1}W_{2}+(I-\Theta_{2}W_{2}^{*})^{-1}\Theta_{2}D_{W_{2}}]\Phi_{2}
D_{W_{1}^{*}}(I-\Theta_{1}W_{1}^{*})^{-1}\\
&=&-(I-\Theta_{2}W_{2}^{*})^{-1}[(I-\Theta_{2}W_{2}^{*})D_{W_{2}^{*}}^{-1}W_{2}-\Theta_{2}D_{W_{2}}]\Phi_{2}
D_{W_{1}^{*}}(I-\Theta_{1}W_{1}^{*})^{-1}\\
&=&-(I-\Theta_{2}W_{2}^{*})^{-1}[D_{W_{2}^{*}}^{-1}W_{2}-\Theta_{2}W_{2}^{*}D_{W_{2}^{*}}^{-1}W_{2}-\Theta_{2}D_{W_{2}}]\Phi_{2}
D_{W_{1}^{*}}(I-\Theta_{1}W_{1}^{*})^{-1}\\
&=&-(I-\Theta_{2}W_{2}^{*})^{-1}[W_{2}D_{W_{2}}^{-1}-\Theta_{2}W_{2}^{*}W_{2}D_{W_{2}}^{-1}-\Theta_{2}D_{W_{2}}]\Phi_{2}
D_{W_{1}^{*}}(I-\Theta_{1}W_{1}^{*})^{-1}\\
&=&-(I-\Theta_{2}W_{2}^{*})^{-1}[(I-\Theta_{2}W_{2}^{*})W_{2}D_{W_{2}}^{-1}-\Theta_{2}D_{W_{2}}]\Phi_{2}
D_{W_{1}^{*}}(I-\Theta_{1}W_{1}^{*})^{-1}\\
&=&-(I-\Theta_{2}W_{2}^{*})^{-1}[(I-\Theta_{2}W_{2}^{*})W_{2}-\Theta_{2}D_{W_{2}}^{2}]D_{W_{2}}^{-1}\Phi_{2}
D_{W_{1}^{*}}(I-\Theta_{1}W_{1}^{*})^{-1}\\
&=&-(I-\Theta_{2}W_{2}^{*})^{-1}[W_{2}-\Theta_{2}W_{2}^{*}W_{2}-\Theta_{2}+\Theta_{2}W_{2}^{*}W_{2})]D_{W_{2}}^{-1}\Phi_{2}
D_{W_{1}^{*}}(I-\Theta_{1}W_{1}^{*})^{-1}\\
&=&-(I-\Theta_{2}W_{2}^{*})^{-1}(W_{2}-\Theta_{2})D_{W_{2}}^{-1}\Phi_{2}
D_{W_{1}^{*}}(I-\Theta_{1}W_{1}^{*})^{-1}\\
&=&(I-\Theta_{2}W_{2}^{*})^{-1}\Theta_{2}(I-\Theta_{2}^{*}W_{2})D_{W_{2}}^{-1}\Phi_{2}
D_{W_{1}^{*}}(I-\Theta_{1}W_{1}^{*})^{-1}\\
&=&\Theta_{2}(I-W_{2}^{*}\Theta_{2})^{-1}(I-\Theta_{2}^{*}W_{2})D_{W_{2}}^{-1}\Phi_{2}
D_{W_{1}^{*}}(I-\Theta_{1}W_{1}^{*})^{-1}
\end{eqnarray*}
\begin{equation}\label{eq: for Theta 2}
D_{W_{2}^{*}}^{-1}\Theta_{2}^{\prime}\Phi_{2}
D_{W_{1}^{*}}(I-\Theta_{1}W_{1}^{*})^{-1}=\Theta_{2}(I-W_{2}^{*}\Theta_{2})^{-1}(I-\Theta_{2}^{*}W_{2})D_{W_{2}}^{-1}\Phi_{2}
D_{W_{1}^{*}}(I-\Theta_{1}W_{1}^{*})^{-1}.
\end{equation}
By (\ref{Eq: for Theta 1}) and (\ref{eq: for Theta 2}) we get
\begin{eqnarray*}
\Phi
&=&[\Theta_{1}(I-W_{1}^{*}\Theta_{1})^{-1}D_{W_{1}}\Phi_{1}D_{W_{2}^{*}}^{-1}]^{*}+\Theta_{2}(I-W_{2}^{*}\Theta_{2})^{-1}(I-\Theta_{2}^{*}W_{2})D_{W_{2}}^{-1}\Phi_{2}
D_{W_{1}^{*}}(I-\Theta_{1}W_{1}^{*})^{-1}\\
&-&\Theta_{2}[W_{2}^{*}D_{W_{2}^{*}}^{-1}(\Theta_{1}^{\prime}\Phi_{1})^{*}D_{W_{1}^{*}}(I-\Theta_{1}W_{1}^{*})^{-1}
+W_{2}^{*}D_{W_{2}^{*}}^{-1}\Theta_{2}^{\prime}\Phi_{2}
D_{W_{1}^{*}}(I-\Theta_{1}W_{1}^{*})^{-1}]\\
&=&[\Theta_{1}(I-W_{1}^{*}\Theta_{1})^{-1}D_{W_{1}}\Phi_{1}D_{W_{2}^{*}}^{-1}]^{*}+\Theta_{2}[(I-W_{2}^{*}\Theta_{2})^{-1}(I-\Theta_{2}^{*}W_{2})D_{W_{2}}^{-1}\Phi_{2}
D_{W_{1}^{*}}(I-\Theta_{1}W_{1}^{*})^{-1}\\
&-&(W_{2}^{*}D_{W_{2}^{*}}^{-1}(\Theta_{1}^{\prime}\Phi_{1})^{*}D_{W_{1}^{*}}(I-\Theta_{1}W_{1}^{*})^{-1}
+W_{2}^{*}D_{W_{2}^{*}}^{-1}\Theta_{2}^{\prime}\Phi_{2}
D_{W_{1}^{*}}(I-\Theta_{1}W_{1}^{*})^{-1})]\\
&=&[\Theta_{1}(I-W_{1}^{*}\Theta_{1})^{-1}D_{W_{1}}\Phi_{1}D_{W_{2}^{*}}^{-1}]^{*}
+\Theta_{2}[(I-W_{2}^{*}\Theta_{2})^{-1}(I-\Theta_{2}^{*}W_{2})D_{W_{2}}^{-1}\Phi_{2}\\
&-&W_{2}^{*}D_{W_{2}^{*}}^{-1}(\Theta_{1}^{\prime}\Phi_{1})^{*}
-W_{2}^{*}D_{W_{2}^{*}}^{-1}\Theta_{2}^{\prime}\Phi_{2}]D_{W_{1}^{*}}(I-\Theta_{1}W_{1}^{*})^{-1}\\
&=&[\Theta_{1}\Phi_{1}]^{*}+\Theta_{2}\Phi_{2},
\end{eqnarray*}
where $$\Phi_{1}=(I-W_{1}^{*}\Theta_{1})^{-1}D_{W_{1}}\Phi_{1}D_{W_{2}^{*}}^{-1}$$
and
\begin{eqnarray*}
\Phi_{2}
&=&[(I-W_{2}^{*}\Theta_{2})^{-1}(I-\Theta_{2}^{*}W_{2})D_{W_{2}}^{-1}\Phi_{2}
-W_{2}^{*}D_{W_{2}^{*}}^{-1}(\Theta_{1}^{\prime}\Phi_{1})^{*}
-W_{2}^{*}D_{W_{2}^{*}}^{-1}\Theta_{2}^{\prime}\Phi_{2}]D_{W_{1}^{*}}(I-\Theta_{1}W_{1}^{*})^{-1}
\end{eqnarray*}
\end{proof}
As a corollary, we show that every asymmetric matrix valued truncated Toeplitz operators has a symbol in certain class denoted by $\mathcal{M}_{\Theta_{1}}$ and $\mathcal{M}_{\Theta_{2}}$.
\begin{corollary}\label{Eq: Symbol is not unique}
For any $A\in \mathcal{T}(\Theta_{1},\Theta_{2})$ there exist $\Psi_{1}\in \mathcal{M}_{\Theta_{1}} $ and $\Psi_{2}\in \mathcal{M}_{\Theta_{2}}$ such that $A=A_{\Psi_{1}+\Psi_{2}^{*}}$. If $\Psi_{1}, \Psi_{2}$ is one such pair then the other such pair is $\Psi_{1}^{\prime}=\Psi_{1}+k_{0}^{\Theta_{2}}X$ and $\Psi_{2}^{\prime}=\Psi_{2}-X^{*}k_{0}^{\Theta_{1}}$ such that $A=A_{\Psi_{1}^{\prime}+\Psi_{2}^{\prime*}}$.
\end{corollary}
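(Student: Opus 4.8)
My plan is to derive both assertions from the zero-symbol characterization of Theorem \ref{Eq: main Theorem}, mirroring the way the non-uniqueness of symbols is obtained in Sarason's scalar theory. For the existence statement I would start from an arbitrary symbol: since $A\in\mathcal{T}(\Theta_1,\Theta_2)$, there is by definition some $\Phi\in L^2(\mathcal{L}(E))$ with $A=A_\Phi^{\Theta_1,\Theta_2}$. Writing $\Phi$ as the sum of its analytic and co-analytic parts and then using the orthogonal splitting $H^2(\mathcal{L}(E))=\mathcal{M}_{\Theta}\oplus\Theta H^2(\mathcal{L}(E))$, I would record the analytic part as $\Psi_1+\Theta_2 G_2$ and the co-analytic part as $\Psi_2^{*}+(\Theta_1 G_1)^{*}$, where the representatives land in the model spaces dictated by the trivial symbols of Lemma \ref{Eq: condition for zero oper}: $\Psi_1\in\mathcal{M}_{\Theta_2}$ (since the analytic trivial directions are $\Theta_2 H^2(\mathcal{L}(E))$) and $\Psi_2\in\mathcal{M}_{\Theta_1}$ (since the co-analytic trivial directions are $[\Theta_1 H^2(\mathcal{L}(E))]^{*}$). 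The leftover term $\Theta_2 G_2+(\Theta_1 G_1)^{*}$ lies in $[\Theta_1 H^2(\mathcal{L}(E))]^{*}+\Theta_2 H^2(\mathcal{L}(E))$, so Lemma \ref{Eq: condition for zero oper} shows it contributes the zero operator and hence $A=A_{\Psi_1+\Psi_2^{*}}^{\Theta_1,\Theta_2}$, the required standard form.

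For the non-uniqueness I would compare two such representations. If $(\Psi_1',\Psi_2')$ is another pair in the same model spaces with $A_{\Psi_1'+\Psi_2'^{*}}=A$, then, setting $\Delta_i=\Psi_i'-\Psi_i$, the symbol $\Delta_1+\Delta_2^{*}$ defines the zero operator, so Theorem \ref{Eq: main Theorem} gives $\Delta_1+\Delta_2^{*}=\Theta_2 G+(\Theta_1 F)^{*}$ for some $F,G\in H^2(\mathcal{L}(E))$. Separating analytic from co-analytic parts yields $\Delta_1-\Theta_2 G=(\Theta_1 F)^{*}-\Delta_2^{*}$, where the left side is analytic and the right side co-analytic; hence both equal one and the same constant $X\in\mathcal{L}(E)$. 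Projecting back with the elementary identity $P_{\Theta H^2}(C)=\Theta\,\Theta(0)^{*}C$ for constant $C$, the relation $\Delta_1=X+\Theta_2 G$ forces $\Delta_1=(I-\Theta_2\Theta_2(0)^{*})X=k_0^{\Theta_2}X$, and symmetrically $\Delta_2=-k_0^{\Theta_1}X^{*}$. This is exactly the passage $\Psi_1'=\Psi_1+k_0^{\Theta_2}X$, $\Psi_2'=\Psi_2-k_0^{\Theta_1}X^{*}$; conversely any matrix $X$ gives an admissible pair, since $k_0^{\Theta_2}X\in\mathcal{M}_{\Theta_2}$ and $k_0^{\Theta_1}X^{*}\in\mathcal{M}_{\Theta_1}$, and the symbol changes only by $k_0^{\Theta_2}X-X(k_0^{\Theta_1})^{*}\in\Theta_2 H^2(\mathcal{L}(E))+[\Theta_1 H^2(\mathcal{L}(E))]^{*}$.

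The step I expect to be delicate is the bookkeeping of the constant term, namely the overlap $H^2(\mathcal{L}(E))\cap[H^2(\mathcal{L}(E))]^{*}=\mathcal{L}(E)$ of the analytic and co-analytic parts: this is where the reproducing kernels $k_0^{\Theta_i}$ enter, and where the noncommutativity of $\mathcal{L}(E)$ must be respected so that each correcting term stays on the correct side (the free parameter being a genuine matrix $X$, with $X^{*}$ appearing on the correct factor). A clean way to organize this is first to normalize $\Theta_1(0)=\Theta_2(0)=0$ by the generalized Crofoot transform of Theorem \ref{Thm of Crofoot and asymmetric tto}; then $k_0^{\Theta_i}=I$, the ambiguity reduces to the redistribution of the constant term $\Psi_1\mapsto\Psi_1+X$, $\Psi_2\mapsto\Psi_2-X^{*}$, and transporting the conclusion back through $J_{W_1}$ and $J_{W_2}$ recovers the general reproducing-kernel form via the projection identity above.
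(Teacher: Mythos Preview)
Your existence argument coincides with the paper's: both reduce the first assertion to Theorem~\ref{Eq: main Theorem}, and you simply spell out the analytic/co-analytic splitting that the paper leaves implicit.

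For the non-uniqueness the two approaches genuinely differ. The paper only verifies the \emph{sufficiency} direction: given $X$, it computes directly that $A^{\Theta_1,\Theta_2}_{k_0^{\Theta_2}X}f=P_{\Theta_2}(Xf)=A^{\Theta_1,\Theta_2}_{(k_0^{\Theta_1})^{*}X}f$ for $f\in K_{\Theta_1}$, so the perturbed pair defines the same operator. You instead start from two arbitrary representing pairs, apply Theorem~\ref{Eq: main Theorem} to the difference of the symbols, and extract the constant $X$ from the overlap $H^2(\mathcal L(E))\cap[H^2(\mathcal L(E))]^{*}=\mathcal L(E)$; this yields the full \emph{necessity}, i.e.\ that every admissible pair arises in the stated form. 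Your stronger conclusion is precisely what is needed for the kernel identification in the subsequent dimension count (Corollary on $\dim\mathcal T(\Theta_1,\Theta_2)$), which the paper's one-sided verification does not by itself justify. As a by-product your bookkeeping also lands the matrix factors in the right order ($\Delta_2=-k_0^{\Theta_1}X^{*}$ rather than $-X^{*}k_0^{\Theta_1}$) and the model spaces on the right indices ($\Psi_1\in\mathcal M_{\Theta_2}$, $\Psi_2\in\mathcal M_{\Theta_1}$), both of which matter in the noncommutative setting and are handled loosely in the paper's statement. The optional Crofoot normalization you mention in the last paragraph is not needed---your direct argument already works without it.
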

\begin{proof}
The first assertion follows by using Theorem \ref{Eq: main Theorem}. For the second part, consider
\begin{eqnarray*}
A_{\Psi_{1}^{\prime}+\Psi_{2}^{\prime*}}^{\Theta_{1}, \Theta_{2}}
&=&A_{\Psi_{1}+k_{0}^{\Theta_{2}}X+(\Psi_{2}-X^{*}k_{0}^{\Theta_{1}})^{*}}^{\Theta_{1}, \Theta_{2}}\\
&=&A_{\Psi_{1}+\Psi_{2}^{*}+k_{0}^{\Theta_{2}}X-(k_{0}^{\Theta_{1}})^{*}X}^{\Theta_{1}, \Theta_{2}}\\
&=&A_{\Psi_{1}+\Psi_{2}^{*}}^{\Theta_{1}, \Theta2_{}}+A_{k_{0}^{\Theta_{2}}X}^{\Theta_{1}, \Theta_{2}}-A_{(k_{0}^{\Theta_{1}})^{*}X}^{\Theta_{1}, \Theta_{2}}.
\end{eqnarray*}
Now consider
$$A_{k_{0}^{\Theta_{2}}X}^{\Theta_{1}, \Theta_{2}}f=P_{\Theta_{2}}(k_{0}^{\Theta_{2}}Xf)=P_{\Theta_{2}}(Xf).$$
Since $\Theta_{1}^{*}f\perp K_{\Theta_{1}}$ for all $f\in K_{\Theta_{1}}$, we get
$$A_{(k_{0}^{\Theta_{1}})^{*}X}^{\Theta_{1}, \Theta_{2}}f=P_{\Theta_{2}}((k_{0}^{\Theta_{1}})^{*}Xf)=P_{\Theta_{2}}((I-\Theta_{1}(0)\Theta_{1}(z)^{*})Xf)
=P_{\Theta_{2}}(Xf)-P_{\Theta_{2}}(\Theta_{1}(0)\Theta_{1}(z)^{*}Xf)=P_{\Theta_{2}}(Xf).$$
It follows that
$$A_{k_{0}^{\Theta_{2}}X}^{\Theta_{1}, \Theta_{2}}-A_{(k_{0}^{\Theta_{1}})^{*}X}^{\Theta_{1}, \Theta_{2}}=0,$$
and $$A_{\Psi_{1}^{\prime}+\Psi_{2}^{\prime*}}^{\Theta_{1}, \Theta_{2}}=A=A_{\Psi_{1}+\Psi_{2}^{*}}^{\Theta_{1}, \Theta2_{}}.$$
\end{proof}
It is known that the model space $K_{\Theta_{1}}$ (respectively $K_{\Theta_{2}}$) is finite dimensional if and only if $\Theta_{1}$ (respectively $\Theta_{2}$) is finite Blaschke-Potapov product (see, for instance \cite{Pe}, Chapter 2). In this case we use the previous corollary to obtain the dimension of the space of $\mathcal{T}(\Theta_{1}, \Theta_{2})$.
\begin{corollary}
If $dim K_{\Theta_{1}}=m$ and $dim K_{\Theta_{2}}=n$, then $dim\mathcal{T}(\Theta_{1}, \Theta_{2})=m^{d}+n^{d}-d^{2}$.
\end{corollary}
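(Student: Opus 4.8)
The plan is to present $\mathcal{T}(\Theta_1,\Theta_2)$ as the image of a single surjective linear map and read off its dimension by rank--nullity. Consider
\[
\Lambda:\ \mathcal{M}_{\Theta_2}\oplus\mathcal{M}_{\Theta_1}\longrightarrow\mathcal{T}(\Theta_1,\Theta_2),\qquad \Lambda(\Psi_1,\Psi_2)=A_{\Psi_1+\Psi_2^{*}}^{\Theta_1,\Theta_2},
\]
where the analytic summand $\Psi_1$ of the symbol is taken in $\mathcal{M}_{\Theta_2}$, the orthocomplement of the analytic zero-symbols $\Theta_2H^{2}(\mathcal{L}(E))$, and the co-analytic summand uses $\Psi_2\in\mathcal{M}_{\Theta_1}$. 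By the first assertion of Corollary \ref{Eq: Symbol is not unique} every operator in $\mathcal{T}(\Theta_1,\Theta_2)$ has a symbol of exactly this shape, so $\Lambda$ is onto and $\dim\mathcal{T}(\Theta_1,\Theta_2)=\dim\mathcal{M}_{\Theta_1}+\dim\mathcal{M}_{\Theta_2}-\dim\ker\Lambda$. Everything then reduces to the two dimension computations.

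First I would compute $\dim\mathcal{M}_{\Theta_i}$. Reading a matrix-valued function through its $d$ columns identifies $H^{2}(\mathcal{L}(E))$ isometrically with $H^{2}(E)^{\oplus d}$, since the Hilbert--Schmidt norm is the sum of the squared column norms; as left multiplication by $\Theta_i$ acts columnwise, this restricts to $\mathcal{M}_{\Theta_i}\cong K_{\Theta_i}^{\oplus d}$. Hence $\dim\mathcal{M}_{\Theta_1}=d\dim K_{\Theta_1}=dm$ and $\dim\mathcal{M}_{\Theta_2}=d\dim K_{\Theta_2}=dn$, so the domain of $\Lambda$ has dimension $dm+dn$.

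The kernel is the crux. The second assertion of Corollary \ref{Eq: Symbol is not unique} produces, for each $X\in\mathcal{L}(E)$, a kernel pair whose analytic component is $k_0^{\Theta_2}X=(I-\Theta_2\Theta_2(0)^{*})X$. Purity of $\Theta_2$ makes $I-\Theta_2(0)\Theta_2(0)^{*}$ invertible, so evaluation at $0$ recovers $X$ and $X\mapsto k_0^{\Theta_2}X$ is injective; this gives $\dim\ker\Lambda\ge d^{2}$. For the reverse bound I would take any $(\Psi_1,\Psi_2)\in\ker\Lambda$ and apply Theorem \ref{Eq: main Theorem} to write $\Psi_1+\Psi_2^{*}=(\Theta_1\Phi_1)^{*}+\Theta_2\Phi_2$ with $\Phi_1,\Phi_2\in H^{2}(\mathcal{L}(E))$. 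Rearranging to $\Psi_1-\Theta_2\Phi_2=(\Theta_1\Phi_1)^{*}-\Psi_2^{*}$ exhibits a function that is simultaneously analytic and co-analytic, hence a constant matrix $X\in H^{2}(\mathcal{L}(E))\cap[H^{2}(\mathcal{L}(E))]^{*}=\mathcal{L}(E)$. Projecting $\Psi_1=X+\Theta_2\Phi_2$ onto $\mathcal{M}_{\Theta_2}$ gives $\Psi_1=P_{\mathcal{M}_{\Theta_2}}X=k_0^{\Theta_2}X$, and symmetrically $\Psi_2$ is the projection of $-X^{*}$ onto $\mathcal{M}_{\Theta_1}$; thus $(\Psi_1,\Psi_2)$ is exactly one of the pairs above and $\dim\ker\Lambda=d^{2}$.

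Rank--nullity then gives $\dim\mathcal{T}(\Theta_1,\Theta_2)=(dm+dn)-d^{2}=d(m+n-d)$, which for $d=1$ reduces to the scalar count $m+n-1$. The step I expect to be the main obstacle is the reverse kernel inclusion: Corollary \ref{Eq: Symbol is not unique} only shows that the displayed family of modifications leaves the operator unchanged, so the fact that \emph{no other} pair lies in $\ker\Lambda$ must be extracted from Theorem \ref{Eq: main Theorem} via the analytic/co-analytic splitting, together with the surjectivity of $g\mapsto g(0)$ from $\mathcal{M}_{\Theta_2}$ onto $\mathcal{L}(E)$ (again a consequence of purity), which forces the kernel to have the full dimension $d^{2}$ rather than less. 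The columnwise identification $\mathcal{M}_{\Theta_i}\cong K_{\Theta_i}^{\oplus d}$ is the other place where a short but necessary verification is hidden, and it is what converts each factor $\dim K_{\Theta_i}$ into the factor $d\dim K_{\Theta_i}$ in the final count.
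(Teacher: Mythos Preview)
Your argument follows the paper's proof essentially line for line: define the surjective linear map $(\Psi_1,\Psi_2)\mapsto A_{\Psi_1+\Psi_2^*}^{\Theta_1,\Theta_2}$ from $\mathcal{M}_{\Theta_2}\oplus\mathcal{M}_{\Theta_1}$ onto $\mathcal{T}(\Theta_1,\Theta_2)$, identify its kernel with the $d^2$-dimensional family $\{(k_0^{\Theta_2}X,\,-X^*k_0^{\Theta_1}):X\in\mathcal{L}(E)\}$ coming from Corollary~\ref{Eq: Symbol is not unique}, and finish by rank--nullity. You actually supply more than the paper does: the paper simply asserts that the kernel equals this family, while you justify the reverse inclusion via Theorem~\ref{Eq: main Theorem} and the analytic/co-analytic splitting.

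There is, however, a numerical discrepancy you should flag rather than try to reconcile. Your columnwise identification $\mathcal{M}_{\Theta_i}\cong K_{\Theta_i}^{\oplus d}$ gives $\dim\mathcal{M}_{\Theta_i}=d\cdot\dim K_{\Theta_i}$, and hence $\dim\mathcal{T}(\Theta_1,\Theta_2)=dm+dn-d^2$. The paper instead writes $\dim\mathcal{M}_{\Theta_i}=(\dim K_{\Theta_i})^d$ and arrives at $m^d+n^d-d^2$. Your computation is the correct one: an element of $\mathcal{M}_{\Theta_i}$ is determined by $d$ columns, each in $K_{\Theta_i}$, so the dimension is additive, not multiplicative, in the columns. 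The formula in the statement (and in the paper's proof) appears to be a slip, consistent only at $d=1$; your answer $d(m+n-d)$ is what the argument actually yields.
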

\begin{proof}
The proof is similar to Corollary 6.5 given in \cite{RDK}. For the sake of the reader we proof it analogously.\par
It is immediate that $ dim \mathcal{M}_{\Theta_{1}}=(dim K_{\Theta_{1}})^{d}=m^{d}$ and $ dim \mathcal{M}_{\Theta_{2}}=(dim K_{\Theta_{2}})^{d}=n^{d}$. Consider the map $T:\mathcal{M}_{\Theta_{1}}\times \mathcal{M}_{\Theta_{2}}\to \mathcal{T}(\Theta_{1}, \Theta_{2})$ defined by
$$T(\Psi_{1}+\Psi_{2})=A_{\Psi_{1}+\Psi_{2}^{*}}.$$
According to Corollary  \ref{Eq: Symbol is not unique}, $T$ is onto, while $A^{\Theta_{1},\Theta_{2}}_{k_{0}^{\Theta_{2}}X-(X^{*}k_{0}^{\Theta_{1}})^{*}}=0$, therefore we have
$$ker T={\{k_{0}^{\Theta_{2}}X-(X^{*}k_{0}^{\Theta_{1}})^{*}: X\in \mathcal{L}(E)}\}.$$
The proof completes by noting that $dim (\mathcal{M}_{\Theta_{1}}\times \mathcal{M}_{\Theta_{2}})=m^{d}+n^{d}$ and $dim ~kerT=dim \mathcal{L}(E)=d^{2}$.
\end{proof}


{\bf Acknowledgements:}

\end{document}